\def\a{\alpha}
\newcommand{\M}{{\mathbb M}}
\newcommand{\ds}{\displaystyle}
\newcommand{\prox}{\ensuremath{\operatorname{prox}}}
\newcommand{\RX}{\ensuremath{\left]-\infty,+\infty\right]}}
\newcommand{\Id}{\mathrm{Id}}
\newcommand{\TT}{{\mathbb{T}}}
\newtheorem{remark}{\textbf{Remark}}[section]
\newtheorem{lemma}{\textbf{Lemma}}[section]
\newtheorem{theorem}{\textbf{Theorem}}[section]
\numberwithin{equation}{section}
\def\dd{{\rm d}}
\def\weight(#1,#2){c_{#1,#2}}
\def\B{\mathcal{B}}
\def\F{\mathcal{F}}
\def\G{\mathcal{G}}
\def\L{\mathcal{L}}
\def\M{\mathcal{M}}
\def\T{\mathcal{T}}
\def\U{\mathcal{U}}
\def\W{\mathcal{W}}
\def\1B{{\bf  1}}
\newcommand{\NN}{\mathbb{N}}
\newcommand{\ZZ}{\mathbb{Z}}
\newcommand{\RR}{\mathbb{R}}
\newcommand\be{\begin{equation}}
\newcommand\ee{\end{equation}}
\newcommand\ba{\begin{array}}
\newcommand\ea{\end{array}}
\newcommand{\bean}{\begin{eqnarray*}}
\newcommand{\eean}{\end{eqnarray*}}
\def\ds{\displaystyle}
\title[A primal-dual algorithm for  time-dependent MFGs ]{On the implementation of a primal-dual algorithm for second order time-dependent  mean field games with local couplings}
\author{
L. Brice\~no-Arias}\thanks{Universidad T\'ecnica Federico Santa
Mar\'ia, Departamento de Matem\'atica, Av. Vicu\~na Mackenna 3939,
San Joaqu\'in, Santiago, Chile. luis.briceno@usm.cl}
\author{D. Kalise}\thanks{Department of Mathematics, Imperial College London, South Kensington Campus, London SW7 2AZ, United Kindgom. dkaliseb@ic.ac.uk}
\author{Z. Kobeissi}\thanks{Laboratoire Jacques-Louis Lions, Univ. Paris Diderot, Sorbonne Paris Cit\'e, UMR 7598, UPMC, CNRS, 75205, Paris, France. zkobeissi@math.univ-paris-diderot.fr}
\author{M. Lauri\`ere}\thanks{ORFE, Princeton University, Princeton, NJ 08540, USA. lauriere@princeton.edu}
\author{\'A. Mateos Gonz\'alez}\thanks{
Institut Montpellli\'erain Alexander Grothendieck (IMAG), UMR CNRS 5149, Universit\'e de Montpellier, 34090 Montpellier, France,
and Institut des Sciences de l'\'Evolution de Montpellier (ISEM), UMR CNRS 5554, Universit\'e de Montpellier, 34095 Montpellier, France,
and MISTEA, UMR CNRS 0729, INRA and SupAgro Montpellier, 34060 Montpellier, France.
alvaro.mateos-gonzalez@umontpellier.fr}
\author{F. J. Silva}\thanks{Toulouse School of Economics, Universit\'e de Toulouse I Capitole, 31015 Toulouse, France and Institut de recherche XLIM-DMI, UMR-CNRS 7252 Facult\'e
des sciences et techniques 
Universit\'e de Limoges, 87060 Limoges, France. \\
francisco.silva@unilim.fr }
\begin{document}

\maketitle
\begin{abstract}
We study a numerical approximation of a time-dependent Mean Field Game (MFG) system with local couplings. The discretization we consider stems from a variational approach described in~\cite{BAKS} for the stationary problem and leads to the finite difference scheme introduced by Achdou and Capuzzo-Dolcetta in \cite{MR2679575}. In order to solve the finite dimensional variational problems, in \cite{BAKS}  the authors implement the primal-dual algorithm introduced by Chambolle and Pock in \cite{MR2782122}, whose core consists in iteratively solving linear systems and applying a proximity operator. We apply that method to time-dependent MFG and, for large viscosity parameters, we improve the linear system solution  by replacing the direct approach used in~\cite{BAKS} by suitable preconditioned iterative algorithms.
% namely BiCGstab, pcg and GMRES. We compare different preconditionnings of these methods, namely incomplete Cholesky factorisation and a multigrid preconditioner.
\end{abstract}
\section{Introduction} In this work we consider the following MFG system with local couplings   
\begin{equation}
\label{MFGcont}
\left\{
\begin{aligned}
&-\partial_t u -\nu\Delta u + H(x, \nabla u) =f(x,m(x,t)) &\text{ in }\mathbb{T}^d \times [0,T],\\
&\partial_t m -\nu\Delta m - \mbox{div}(\nabla_p H(x,Du)m)=0 &\text{ in } \mathbb{T}^d\times [0,T],\\
&m(\cdot,\cdot)=m_0(\cdot), \hspace{0.3cm} u(\cdot,T)=g(\cdot,m(\cdot,T)) &\text{ in } \mathbb{T}^d.
%&\int_{\mathbb{T}^d} m_t(x)dx = 1, 
%m(t,x) \leq d(t,x)&\text{ in } [0;T]\times\mathbb{T}^d
\end{aligned}
\right.\tag{MFG}
\end{equation}
In the notation above  $\nu \geq 0$, $d\in \NN$, $\TT^d$ is the $d$-dimensional torus,  $H: \TT^d \times \RR^d \to \RR$ is jointly continuous and convex with respect to its second variable, $f$, $g: \TT^d\times \RR \to \RR$ are continuous functions and $m_0 \in L^1(\TT^d)$ satisfies $m_0 \geq 0$ and $\int_{\TT^d}m_0(x) \dd x=1$. 

System \eqref{MFGcont} has been introduced by J.-M. Lasry and P.-L. Lions in \cite{LasryLions06ii, LasryLions07} in order to describe the asymptotic behaviour of symmetric stochastic differential games as the number of players tends to infinity. Several analytical techniques can be used to prove the existence of solutions to \eqref{MFGcont} under various assumptions on the data.  Despite the recent introduction of the MFG system, the literature dedicated to its theoretical study is already too rich  to be covered exhaustively in this introduction. The interested reader may refer to the % Even if system \eqref{MFGcont} has been introduced recently, there are already too many references about   its theoretical study to be exhaustive in this introduction and we refer the interested reader to the
monographs  \cite{MR3134900,MR3559742},  the surveys \cite{Cardialaguet10,MR3195844} and the references therein for the state of the art of the subject.

A useful approach that can be used to establish the existence of solutions to  \eqref{MFGcont} is the variational one, already presented in \cite{LasryLions07}. The main idea behind is that, at least formally, system  \eqref{MFGcont} can be seen as the first order optimality condition associated to minimizers of the following optimization problem
\be\label{continuous_optimization_problem}\ba{l}
\hspace{0.6cm} \inf_{(m,w)} \hspace{0.3cm} \int_{0}^{T}\int_{\TT^d} \left[ b(x,m(x,t), w(x,t)) + F(x,m(x,t)) \right] \dd x + \int_{\TT^d} G(x,m(x,T)) \dd x \\[6pt]
\mbox{subject to } \hspace{0.6cm} \partial_t m -\nu \Delta m + \mbox{div}(w) =0 \hspace{0.3cm} \mbox{in } \; \TT^d \times (0,T), \\[6pt]
\hspace{2.3cm} m(\cdot,0) = m_0(\cdot) \hspace{0.3cm} \mbox{in } \TT^d,
\ea \tag{P}
\ee
(provided that they exist). In \eqref{continuous_optimization_problem}, the functions $b:\TT^d \times \RR \times \RR^d \to \RR \cup \{+\infty\}$ and $F$, $G: \TT^d \times \RR \to \RR \cup \{+\infty\}$ are defined as follows
\be\label{functions_definitions} \ba{c}
b(x,m,w) := \left\{
\ba{ll}
 mH^*(x,-\frac{w}{m}) 
&\text{ if } m>0,\\
 0 &\text{ if } (m,w)=(0,0), \\
 +\infty &\text{ otherwise,}
\ea
\right.\\[25pt] 

F(x,m) := \left\{ \ba{ll} \int_{0}^{m} f(x,m') \dd m' & \mbox{if } m\geq 0, \\[4pt]
							+\infty & \mbox{otherwise,}\ea \right. \hspace{0.3cm} G(x,m) := \left\{ \ba{ll} \int_{0}^{m} g(x,m') \dd m' & \mbox{if } m\geq 0, \\[4pt]
							+\infty & \mbox{otherwise,}\ea \right.
\ea  
\ee
where, in the definition of $b$,  $H^{\ast}(x,\cdot)$ denotes the Legendre-Fenchel conjugate of $H(x,\cdot)$. Under the assumption that $f(x,\cdot)$ and $g(x,\cdot)$ are non-decreasing, problem $(P)$ is shown to be a convex optimization problem and convex  duality techniques can be successfully applied in order to provide existence and uniqueness results to \eqref{MFGcont}. This argument has been made rigorous in several articles: let us mention \cite{MR3408214,MR3358627}  in the context of first order MFGs ($\nu=0)$, the paper \cite{MR3399179} for degenerate second order MFGs, and finally  \cite{MR3420414,meszaros_silva_preprint_17}  for ergodic second order MFGs.  

The variational approach described above has also been successful in the numerical resolution of system  \eqref{MFGcont}. In this direction, we mention the article \cite{MR2647032} dealing with applications in economics, the paper \cite{MR2888257} concerned with the so-called planning problem in MFGs, the works \cite{MR3395203,andreev_17} focused on the resolution of a discretization of \eqref{continuous_optimization_problem} by the Alternating Direction Method of Multipliers (ADMM) and \cite{BAKS} where several first order methods are implemented and compared for the stationary version of \eqref{MFGcont}. Let us mention that the variational approach is closely related to the so-called mean field optimal control problem, for which numerical methods have been studied in \cite{BFMW14,ACFK17}, among others.

In this paper we consider a finite difference discretization of problem  \eqref{continuous_optimization_problem}. Assuming that  $f(x,\cdot)$ and $g(x,\cdot)$ are non-decreasing, the discretization that we consider is such that it preserves the convexity properties of problem \eqref{continuous_optimization_problem} and the first order optimality conditions for its solutions, which are shown to exist,  coincide  with the finite difference scheme for MFGs introduced in \cite{MR2679575}. A very nice feature of this approach is that the solutions of the resulting discretized MFGs are shown to converge to the solutions of \eqref{MFGcont}. We refer the reader to \cite{MR3097034}, where the convergence result is obtained under the assumption  that  \eqref{MFGcont} admits a unique classical solution, and to \cite{MR3452251} in the framework of weak solutions (see \cite{MR3305653} for the definition of this notion).  We solve the discrete convex optimization problem by using the primal-dual algorithm introduced in \cite{MR2782122}. As was pointed out in \cite{BAKS} (see also \cite{MR3158785} in the context of transport problems), the primal-dual algorithm we consider seems to be faster than the ADMM when $\nu$ in \eqref{MFGcont} is small (or null). On the other hand, the efficiency of both methods is arguable when $\nu$ is large. This is due to the fact that, in both algorithms, at each iteration  one has to invert a matrix whose condition number importantly increases as the  viscosity parameter increases. Naturally, preconditioning strategies (see e.g. \cite{Benzi2002}) can then be used in order to improve the efficiency of both algorithms. This strategy has been already successfully implemented in \cite{andreev_17} for the ADMM.

Our main objective in the present work is to take a closer look at the phenomenon described at the end of previous paragraph when considering the primal-dual algorithm. Therefore, we focus our analysis in the case where $\nu>0$. We have implemented standard indirect methods for solving the linear systems appearing in the computation of the iterates of the primal-dual algorithm. As our numerical results suggest, it is very important to design suitable preconditioning strategies in order to be able to find solutions of the discretization of problem $(P)$ efficiently, and in a robust way with respect to the viscosity parameter. For this, we explore different preconditioning strategies, and in particular, multigrid preconditioning (see also \cite{AchdouPerez2012,andreev_17}, where multigrid strategies have been implemented for other solution methods). 

The article is organized as follows. In section \ref{preliminaries} we introduce some standard notation and we recall the finite difference scheme for \eqref{MFGcont} introduced in \cite{MR2679575}. The variational interpretation of this finite difference scheme is discussed in section \ref{variational_interpretation}. Next, in section \ref{chambolle_pock_section}, we recall the primal-dual algorithm introduced in \cite{MR2782122} and we consider its application to the discretization of \eqref{continuous_optimization_problem}. In section \ref{sec_precond}, we summarize the preconditioning strategies that we consider and we discuss a numerical example, which is the time-dependent version of one of the examples treated in \cite{MR2679575,BAKS}.

\section{Preliminaries and the finite difference scheme in \cite{MR2679575}}\label{preliminaries}

In this section we introduce some basic notation and present the finite difference scheme introduced in \cite{MR2679575}, whose efficient resolution will be the main subject of this article. For the sake of simplicity, we will assume that $d=2$ and that given $q >1$, with conjugate exponent denoted by $q'= q/(q-1)$, the Hamiltonian $H: \TT^2 \times \RR^2 \to \RR$ has the form 
$$
H(x,p)= \frac{1}{q'} |p|^{q'} \hspace{0.3cm} \forall \; x\in \TT^2, \; p \in \RR^2. 
$$
In this case the function $b$ defined in \eqref{functions_definitions} takes the form
$$
b(x,m,w)= \left\{
\ba{ll} \frac{|w|^q}{qm^{q-1}}
&\text{ if } m>0,\\
 0 &\text{ if } (m,w)=(0,0), \\
 +\infty &\text{ otherwise.}
\ea
\right.
$$
Let $N_T$, $N_h$ be positive integers and set $\Delta t=T/N_T$, the time step, and $h=1/N_h$, the space step. We associate to these steps a time grid $\T_{\Delta t}:= \{t_k = k\Delta t \; ; \; k=0,\dots,N_T\}$ 
and   a space grid    $\mathbb{T}^2_h:= \{x_{i,j}=(ih,jh) \; ; \;  i, \; j \in \ZZ\}$. Since $\mathbb{T}^2_h$ intends to discretize  $\mathbb{T}^2$, we impose the identification $z_{i,j}= z_{(i \; \mbox{ \small mod} \; N_h), (j \; \mbox{ \small mod} \; N_h)}$, which allows to assume that $i$, $j \in \{0,\hdots, N_{h}-1\}$.  A function $y:= \TT^2 \times [0,T] \to \RR$ is approximated by its values at  $(x_{i,j},t_k)\in \mathbb{T}^2_h\times \T_{\Delta t}$, which we denote by $y^k_{i,j}:=y(x_{i,j},t_k)$.
%(and thus we identify the set of function $y:\{t_0,\dots, t_{N_T}\}\times \mathbb{T}^2_h \rightarrow \mathbb{R}$ with $\mathbb{R}^{(N_T+1)\times N_h\times N_h}$).
Given $y:\mathbb{T}^2_h \to \mathbb{R}$ we define the first order finite difference operators

\begin{equation}
\begin{aligned}
&(D_1y)_{i,j} := \frac{y_{i+1,j}-y_{i,j}}{h}, \text{ and } (D_2y)_{i,j} := \frac{y_{i,j+1}-y_{i,j}}{h}, 
\\
&[D_hy]_{i,j} := ((D_1y)_{i,j},(D_1y)_{i-1,j}, (D_2y)_{i,j}, (D_2 y)_{i,j-1}),
\\
&\widehat{[D_hy]}_{i,j}=((D_1y)^-_{i,j},-(D_1y)^+_{i-1,j}, (D_2y)^-_{i,j}, -(D_2 y)^+_{i,j-1}),
\end{aligned}
\end{equation}
where, for every $a\in \mathbb{R}$, we set $a^+:=\max(a,0)$ and $a^-:=a^+-a$. The discrete Laplacian operator $\Delta_hy:\mathbb{T}^2_h \rightarrow \mathbb{R}$ is defined by 
$$
(\Delta_hy)_{i,j} := -\frac{1}{h^2}(4y_{i,j}-y_{i+1,j}-y_{i-1,j} -y_{i,j+1} - y_{i,j-1}).
$$
For $y:  \T_{\Delta t} \to  \mathbb{R}$ we define the discrete time derivative
$$
 D_ty^k := \frac{y^{k+1}-y^k}{\Delta t}.
$$
The Godunov-type finite difference discretization of \eqref{MFGcont} introduced in  \cite{MR2679575} is as follows: find $u$, $m: \TT_h^2 \times \T_{\Delta t} \to \RR$ such that 
 for all    $0\leq i,j\leq N_h-1$ and $0\leq k \leq N_T-1$ we have
\begin{equation}
\label{MFGdis}
\left\{
\begin{array}{l}
\hspace{0.3cm} -D_t u_{i,j}^k -\nu(\Delta_h u^k)_{i,j} + \frac{1}{q'}|\widehat{[D_hu^k]}_{i,j}|^{q'}  = f(x_{i,j},m_{i,j}^{k+1}), \\[10pt]
  D_t m_{i,j}^k -\nu(\Delta_h m^{k+1})_{i,j} - \mathcal{T}_{i,j}(u^k,m^{k+1}) = 0, \\[10pt]
\hspace{0.3cm} m_{i,j}^0=\bar{m}_{i,j},   \hspace{0.6cm}  u_{i,j}^{N_T}=g(x_{i,j},m_{i,j}^{N_T}),
\end{array}\right.   \tag{MFG$_{h,\Delta t}$}
\end{equation}
where 
\be\label{initial_condition_discretization}
\bar{m}_{i,j}:= \int_{|x-x_{i,j}|_{\infty} \leq \frac{h}{2}} m_0(x) \dd x \geq 0,
\ee
and the operator $\mathcal{T}(u',m'):\mathbb{T}^2_h \rightarrow \mathbb{R}$, with $u',m':\mathbb{T}^2_h \rightarrow \mathbb{R}$, is defined by 

$$\ba{ll}
 \mathcal{T}_{i,j}(u',m'):=& \frac{1}{h} \left( 
-m'_{i,j}\frac{1}{q'}|\widehat{[D_hu']}_{i,j}|^{\frac{2-q}{q-1}}(D_1u')_{i,j}^-
+m'_{i-1,j}\frac{1}{q'}|\widehat{[D_hu']}_{i-1,j}|^{\frac{2-q}{q-1}}(D_1u')_{i-1,j}^- \right.\\[10pt]
\; & \hspace{0.6cm} +m'_{i+1,j}\frac{1}{q'}|\widehat{[D_hu']}_{i+1,j}|^{\frac{2-q}{q-1}}(D_1u')_{i,j}^+
-m'_{i,j}\frac{1}{q'}|\widehat{[D_hu']}_{i,j}|^{\frac{2-q}{q-1}}(D_1u')_{i-1,j}^+\\[10pt]
\; & \hspace{0.6cm} -m'_{i,j}\frac{1}{q'}|\widehat{[D_hu']}_{i,j}|^{\frac{2-q}{q-1}}(D_2u')_{i,j}^-
+m'_{i,j-1}\frac{1}{q'}|\widehat{[D_hu']}_{i,j-1}|^{\frac{2-q}{q-1}}(D_2u')_{i,j-1}^-\\[10pt]
\; & \hspace{0.6cm} \left.+m'_{i,j+1}\frac{1}{q'}|\widehat{[D_hu']}_{i,j+1}|^{\frac{2-q}{q-1}}(D_2u')_{i,j}^+
-m'_{i,j}\frac{1}{q'}|\widehat{[D_hu']}_{i,j}|^{\frac{2-q}{q-1}}(D_2u')_{i,j-1}^+ \right),
\ea
$$
%\begin{multline}
%h\mathcal{T}_{i,j}(u',m')= \\
%-m'_{i,j}\frac{1}{q'}|\widehat{[D_hu']}_{i,j}|^{\frac{2-q}{q-1}}(D_1u')_{i,j}^-
%+m'_{i-1,j}\frac{1}{q'}|\widehat{[D_hu']}_{i-1,j}|^{\frac{2-q}{q-1}}(D_1u')_{i-1,j}^-
%\\
%+m'_{i+1,j}\frac{1}{q'}|\widehat{[D_hu']}_{i+1,j}|^{\frac{2-q}{q-1}}(D_1u')_{i,j}^+
%-m'_{i,j}\frac{1}{q'}|\widehat{[D_hu']}_{i,j}|^{\frac{2-q}{q-1}}(D_1u')_{i-1,j}^+
%\\
%-m'_{i,j}\frac{1}{q'}|\widehat{[D_hu']}_{i,j}|^{\frac{2-q}{q-1}}(D_2u')_{i,j}^-
%+m'_{i,j-1}\frac{1}{q'}|\widehat{[D_hu']}_{i,j-1}|^{\frac{2-q}{q-1}}(D_2u')_{i,j-1}^-
%\\
%+m'_{i,j+1}\frac{1}{q'}|\widehat{[D_hu']}_{i,j+1}|^{\frac{2-q}{q-1}}(D_2u')_{i,j}^+
%-m'_{i,j}\frac{1}{q'}|\widehat{[D_hu']}_{i,j}|^{\frac{2-q}{q-1}}(D_2u')_{i,j-1}^+.
%\end{multline} 
\normalsize
with the convention:

\begin{equation}\label{convention}
|\widehat{[D_hu']}_{i,j}|^{\frac{2-q}{q-1}}\widehat{[D_hu']}_{i,j}=0
\text{ if } q>0 \text{ and } \widehat{[D_hu']}_{i,j}=0.
\end{equation}

%Note that \eqref{MFGdis} is a semi-implicit scheme since the first equation is backward in time and the second equation is forward in time.

The existence of a solution $(u^{h,\Delta t}, m^{h,\Delta t})$ of system \eqref{MFGdis} is proved in \cite[Theorem 6]{MR2679575} as a consequence of Brouwer fixed point theorem. Furthermore, if we assume that $f$ and $g$ are  increasing with respect to their second argument, and one of them is strictly increasing, this solution is unique when $h$ is small enough (see \cite[Theorem 7]{MR2679575}). As we will see in the next section, these results can also be obtained by variational arguments. The convergence,  as $h$ and $\Delta t$ tend to $0$, of suitable extensions of $u^{h,\Delta t}$ and $m^{h, \Delta t}$ to $\TT^2 \times [0,T]$ to a solution $(u,m)$ of \eqref{MFGcont} is proved in \cite{MR3097034} under the assumption that $(u,m)$ is unique and sufficiently regular. The later smoothness assumption has been relaxed in \cite{MR3452251}. 

%In the remaining of this section, we introduce a variational discrete problem which is the dual problem of~\eqref{MFGdis}, and we prove the existence of a solution of both the problems.
%
%Analogous results in the case of stationary Mean Field Game are done in \cite{BAKS}.

%%%%%%%%%%%%%%%%%%%%%%%%%%%%%%%%%%%%%%%%%%%%%%%%%%%%%%%

\section{The finite dimensional variational problem and the discrete MFG system}\label{variational_interpretation}

Following \cite{BAKS} in the stationary case and \cite{MR2888257} for the planning problem, we introduce some finite-dimensional operators that will allow us to write easily a finite dimensional version of problem \eqref{continuous_optimization_problem}. Denoting by $\mathbb{R}_+$ the set of non-negative real numbers  and by $\mathbb{R}_-$ the set of non-positive real numbers, we define $K:=\mathbb{R}_+\times\mathbb{R}_-\times\mathbb{R}_+\times\mathbb{R}_-$  and for $v = (v^{(1)},v^{(2)},v^{(3)},v^{(4)}) \in \RR^4$ we denote by $P_K(v) = ((v^{(1)})^+,-(v^{(2)})^-,(v^{(3)})^+,-(v^{(4)})^-) $ its orthogonal projection  onto $K$.                                                 
Let $\M:=\mathbb{R}^{(N_T+1)\times N_h\times N_h}$,  
$\mathcal{W}:=(\mathbb{R}^4)^{N_T\times N_h \times N_h}$ and  $\U:=  \mathbb{R}^{N_T\times N_h\times N_h}$.  Let  $A: \M \to \U$ and $B:\mathcal{W}\to \U$ be the linear operators defined by  
\begin{equation}\label{definition_of_A_B}
\begin{aligned}
&(Am)_{i,j}^k :=  D_tm_{i,j}^{k}-\nu(\Delta_hm^{k+1})_{i,j},  \\
&(Bw)_{i,j}^k :=  (D_1w^{k,(1)})_{i-1,j}+(D_1w^{k,(2)})_{i,j}+(D_2w^{k,(3)})_{i,j-1}+(D_2w^{k,(4)})_{i,j},
\end{aligned}
\end{equation}
for all $0\leq i,j \leq N_h-1$ and $0\leq k\leq N_T-1$. One can easily check (see e.g. \cite{MR2679575}) that the corresponding dual  operators are given by
\begin{equation}\label{adjoints_operators}
\begin{aligned}
&(B^*u)_{i,j}^k= -[D_hu^k]_{i,j} \quad \text{ for all } 0\leq k \leq N_T-1,\\
&(A^*u)_{i,j}^k= - D_tu_{i,j}^{k-1}-\nu(\Delta_hu^{k-1})_{i,j}, \quad \text{ if } 1\leq k \leq N_T-1, \\
&(A^*u)_{i,j}^0= -\frac{1}{\Delta t}u_{i,j}^0, \\
&(A^*u)_{i,j}^{N_T}= \frac{1}{\Delta t}u_{i,j}^{N_T-1}-\nu(\Delta_hu^{N_T-1})_{i,j},
\end{aligned}
\end{equation}
for all $u \in \U$. For later %latter
 use,  notice that 
$$\mbox{Ker}(B^\ast)= \{ u \in \U \; | \;   \forall \; k=0, \hdots, N_T-1 \; \; \mbox{there exists } c_{k} \in \RR \; \; \mbox{such that } u_{i,j}^k=c_k \; \; \forall \; i, j\},$$
and so 
\be\label{image_of_B}
\mbox{Im}(B)= \mbox{Ker}(B^\ast)^{\perp}= \Big\{ u \in \U \; \big| \;  \sum_{i,j} u_{i,j}^k= 0 
\hspace{0.3cm} \forall \; k=0, \hdots, N_T-1\Big\}.
\ee
%Since, by periodicity, $-\nu \sum_{i,j} (\Delta_h m^k)_{i,j}=0$ for all $k=1, \hdots, N_{T}$, 
Let us define $\widehat{b}:\mathbb{R}\times\mathbb{R}^4 \to   \RR\cup \{+\infty\}$
\be\label{definition_b_hat}
\widehat{b}(m,w):=
\left\{
\ba{ll}
  \frac{|w|^q}{qm^{q-1}},
&\text{ if } m>0, w \in K,\\[6pt]
 0, &    \text{ if } (m,w)=(0,0), \\[6pt]
 +\infty, &\text{ otherwise},
\ea
\right.
\ee
and the functions  $\B$, $\F: \M \times \W \to \RR$, $\G: \M \times \W \to \M \times \RR^{N_h \times N_h}$ as

\begin{equation}\label{definitions_functions_optimization_problem}
\ba{rcl}
\B(m,w)&:=& \ds \sum_{\substack{1\leq k \leq N_T, \\ 0\leq i,j\leq N_h-1}}\widehat{b}(m_{i,j}^k,w_{i,j}^{k-1}),\\[10pt]
\F(m)&:=&  \ds \sum_{\substack{1\leq k \leq N_T, \\
0\leq i,j\leq N_h-1}}F(x_{i,j},m_{i,j}^k)
+ \frac{1}{\Delta t}\sum_{0\leq i,j\leq N_h-1} G(x_{i,j},m_{i,j}^{N_T}),\\[25pt]
\G(m,w)&:=&(Am+Bw,m^0).
\ea
\end{equation}
Note that if $(m,w) \in \M \times \W$ is such that $\G(m,w)=(0,\bar{m})$, where we recall that $\bar{m}$ is defined in \eqref{initial_condition_discretization}, then 
\be\label{conservation_of_mass_property}
h^2 \sum_{i,j} m_{i,j}^k = 1 \hspace{0.3cm} \forall \; k=0,\hdots, N_T. 
\ee
Indeed, by periodicity, $-\sum_{i,j} (\Delta_h m^{k+1})_{i,j}=0$ and $\sum_{i,j} (Bw)_{i,j}^{k}=0$ for all $k=0, \hdots, N_T-1$. This implies that 
$$
0 = \sum_{i,j} (Am+Bw)^k_{i,j}= \frac{\sum_{i,j}m^{k+1}_{i,j}}{\Delta t} -\frac{\sum_{i,j}m^{k}_{i,j}}{\Delta t}, 
$$
and so $h^2\sum_{i,j}m^{k}_{i,j}= h^2\sum_{i,j}\bar{m}_{i,j}=1$ for all $k=0, \hdots, N_T$. 

%If $d\in \RR^{N_h\times N_h}$ has only positive coordinates then we define $\mathcal{D}=\{m \in \mathcal{M}, m_{i,j}^k \leq d_{i,j}, 0\leq k \leq N_T, 0\leq i,j \leq N_h-1\}$. We suppose that $h^2\sum d_{i,j}>1$ (otherwise no discrete probability density can be inside $\mathcal{D}$).
%\subsection{Discrete variational form}
%The variational problem for the continuous Mean Field Game system involves the Lagragian associated with the Hamiltonian from the PDE system. The Lagragian is the Fenchel-Morau conjugate to the Hamiltonian, thus we introduce the following function
%
%\begin{equation}
%\widehat{b}:\mathbb{R}\times\mathbb{R}^4 \rightarrow ]-\infty;
%+\infty]:(m,w)\mapsto 
%\left\{
%\begin{aligned}
%&mH^*\left(x,-\frac{w}{m}\right)=\frac{|w|^q}{qm^{q-1}},
%&\text{ if } m>0, w \in K,\\
%&0, &\text{ if } (m,w)=(0,0), \\
%&+\infty, &\text{ otherwise}.
%\end{aligned}
%\right.
%\end{equation}
%The discrete analogue of the part of the running cost depending on $H$ in the functional which the variational problem minimizes is:
%\begin{equation}
%\mathcal{B}(m,w)=\sum_{\substack{1\leq k \leq N_T, \\ 0\leq i,j\leq N_h-1}}\widehat{b}(m_{i,j}^k,w_{i,j}^{k-1}).
%\end{equation}
%The rest of this functional involves the coupling function $f$ and the terminal condition of $u$,
%\begin{equation}
%\mathcal{F}(m)=\sum_{1\leq k \leq N_T, 0\leq i,j\leq N_h-1}F(x_{i,j},m_{i,j}^k)
%+ \frac{1}{\Delta t}\sum_{i,j} \Phi(x_{i,j},m_{i,j}^{N_T}).
%\end{equation}
The discretization of the variational problem \eqref{continuous_optimization_problem} that we consider is 
\begin{equation}
\label{VARdis}
\inf_{(m,w)\in \mathcal{M}\times\mathcal{W}}
\mathcal{B}(m,w)+\mathcal{F}(m), \; \;  \text{ subject to } \; \;  \G(m,w)=(0,\bar{m}), \tag{P$_{h,\Delta t}$}
\end{equation}
where we recall that $F$ and $G$ in \eqref{definitions_functions_optimization_problem} are defined in \eqref{functions_definitions}. 

We have the following result
\begin{theorem}
\label{thm:existence}
For any $\nu > 0$ problem  \eqref{VARdis} admits at least one solution $(m^{h,\Delta t}, w^{h,\Delta t})$ and associated to it there exists $u^{h,\Delta t}: \M  \times \W  \to \RR$ such that \eqref{MFGdis} holds true. Moreover, $(m^{h,\Delta t})^{k}_{i,j}>0$ for all $k=1, \hdots, N_T$,  $i$, $j =0, \hdots, N_h -1$. 
%
%\begin{enumerate}
%\item[(i)] Problem \eqref{VARdis} admit at least one solution $(u^{h,\Delta t}, w^{h,\Delta t})$ and the optimal cost is finite.
%
%\item[(ii)] Let $(m,w)$ be a solution of \eqref{VARdis}. Then there exists $(u,\mu,p)\in \mathcal{M}$ such that for all $0\leq k\leq N_T-1$ and $0\leq i,j \leq N_h-1$,
%
%\begin{equation}
%\label{MFGdis2}
%\left\{
%\begin{aligned}
%&-(D_t u_{i,j})^k -\nu(\Delta_h u^k)_{i,j} + \frac{1}{q'}|\widehat{[D_hu^k]}_{i,j}|^{q'} + \mu_{i,j}^k - p_{i,j}^k =f(x_{i,j},m_{i,j}^{k+1}), \\
%&(D_t m_{i,j})^k -\nu(\Delta_h m^{k+1})_{i,j} - \mathcal{T}_{i,j}(u^k,m^{k+1})=0,\\
%&m_{i,j}^0=\bar{m}_{i,j}, \\
%&u_{i,j}^{N_T}=g(x_{i,j},m_{i,j}^{N_T}),\\
%&0 \leq m_{i,j}^k \leq d_{i,j}, \\
%&\mu_{i,j}^k\geq 0,
%\mu_{i,j}^km_{i,j}^k=0, \\
%&p_{i,j,k}\geq 0 , p_{i,j,k}(d_{i,j,k}-m_{i,j,k})=0.
%\end{aligned}
%\right.
%\end{equation}
%\end{enumerate}
\end{theorem}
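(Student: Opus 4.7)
My approach is to prove the three assertions of the theorem in sequence: existence of a minimizer of \eqref{VARdis} by the direct method, identification of \eqref{MFGdis} with the first--order optimality conditions, and strict positivity of the density via a discrete maximum principle exploiting $\nu>0$. For existence I would first observe that $\mathcal{B}+\mathcal{F}$ is convex and lower semi-continuous: $\widehat b$ is the perspective on $K$ of the convex function $w\mapsto |w|^q/q$, hence jointly convex and lower semi-continuous on $\mathbb R\times\mathbb R^4$, and the primitives $F(x,\cdot),\, G(x,\cdot)$ of the non-decreasing functions $f(x,\cdot),\, g(x,\cdot)$ are convex on $\mathbb R_+$. A feasible pair exists: take $w\equiv 0$ and define $m^{k+1}$ recursively by $(I-\Delta t\,\nu\,\Delta_h)\,m^{k+1}=m^k$ starting from $m^0=\bar m$, which is solvable on the torus since the operator is an $M$-matrix. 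To obtain coercivity I would combine the mass conservation identity \eqref{conservation_of_mass_property}, which forces $0\leq m^k_{i,j}\leq h^{-2}$ along any feasible minimizing sequence, with the pointwise estimate $|w|^q \leq q\,m^{q-1}\,\widehat b(m,w)$ valid on the effective domain of $\widehat b$, yielding a uniform bound on $w$. Compactness in the finite-dimensional setting together with lower semi-continuity then deliver a minimizer $(m^{h,\Delta t},w^{h,\Delta t})$.

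Since \eqref{VARdis} is a convex problem with affine constraints and finite value, there exists a Lagrange multiplier $u^{h,\Delta t}\in\mathcal U$ for the equation $Am+Bw=0$, and a second multiplier for the initial constraint $m^0=\bar m$ which is absorbed in the $k=0$ line of the Hamilton--Jacobi equation. The stationarity condition in $w$, using the formula for $B^*$ in \eqref{adjoints_operators}, reads $\nabla_w \widehat b(m^{k}_{i,j},w^{k-1}_{i,j}) = [D_h u^{k-1}]_{i,j}$ in the interior of the effective domain of $\widehat b$, with a subgradient inclusion on the boundary handled by the convention \eqref{convention}. This identifies $w^{h,\Delta t}$ with the upwind discretization of the optimal drift $-m\,\nabla_p H(x,\nabla u)$; substituting into the primal equation $Am^{h,\Delta t}+Bw^{h,\Delta t}=0$ produces the Fokker--Planck line of \eqref{MFGdis}, with the operator $\mathcal T$ emerging from the regrouping of the discrete divergence $Bw$ in terms of $m^{h,\Delta t}$ and $u^{h,\Delta t}$. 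The stationarity in $m$, combined with the expression of $A^*$ in \eqref{adjoints_operators} and the identity $\partial_m \widehat b(m,w)=-|w|^q/(q\,m^q)$ on $\{m>0\}$, yields the Hamilton--Jacobi line of \eqref{MFGdis} and the terminal condition $u^{N_T}_{i,j}=g(x_{i,j},m^{N_T}_{i,j})$.

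Finally, for strict positivity I would rewrite the continuity equation of \eqref{MFGdis} as $(I-\Delta t\,\nu\,\Delta_h-\Delta t\,\mathcal T^{k+1})\,m^{k+1}=m^k$, where $\mathcal T^{k+1}$ denotes the linear operator on $\mathcal M$ obtained by freezing $u^k$ in $\mathcal T(u^k,\cdot)$. By the upwind structure of $\mathcal T$ and because $\nu>0$, the associated matrix has strictly positive diagonal, non-positive off-diagonal entries, is weakly diagonally dominant, and is irreducible on the periodic grid; it is therefore an irreducible $M$-matrix whose inverse has strictly positive entries. Since $m^0=\bar m\geq 0$ with $h^2\sum_{i,j}\bar m_{i,j}=1$ by \eqref{initial_condition_discretization}, an immediate induction on $k$ yields $(m^{h,\Delta t})^k_{i,j}>0$ for every $1\leq k\leq N_T$ and all $i,j$. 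The main obstacle I anticipate is the bookkeeping in the second step: the non-smooth optimality conditions on the boundary of the effective domain of $\widehat b$ must be shown to reassemble exactly into the upwind expression of $\mathcal T$ and the $|\widehat{[D_h u]}|^{q'}/q'$ term of the Hamilton--Jacobi equation. The third step is then what legitimates, a posteriori, the pointwise form of \eqref{MFGdis} by ruling out the boundary case for $k\geq 1$.
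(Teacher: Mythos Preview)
Your overall strategy is correct, but it differs from the paper's in the \emph{order} in which positivity is established and in the technique used. The paper proves strict positivity of $m^{h,\Delta t}$ directly from the variational problem, \emph{before} introducing any multiplier (Lemma~\ref{existence_of_solutions_discrete_variational_problems}): if $m^k_{i,j}=0$ for some $k\ge 1$, then finiteness of $\widehat b$ forces $w^{k-1}_{i,j}=0$, and the primal constraint $(Am+Bw)^{k-1}_{i,j}=0$ then reads as a non-positive term (containing $-m^{k-1}_{i,j}/\Delta t$ and $-\nu/h^2$ times the neighbouring values of $m^k$) equal to a non-negative term (the incoming upwind fluxes). Both must vanish, and since $\nu>0$ this forces the spatial neighbours of $m^k_{i,j}$ to vanish as well; propagating over the periodic grid yields $m^k\equiv 0$, contradicting \eqref{conservation_of_mass_property}. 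Only \emph{after} this does the paper write the Lagrangian \eqref{e:lagrang} and compute the stationarity conditions \eqref{stationary_condition}, where $m>0$ makes $\widehat b$ differentiable in $m$ and the calculus unambiguous. Your route instead first extracts the Fokker--Planck line from the optimality condition in $w$ and the primal constraint, and then proves positivity via the M-matrix structure of $I-\Delta t\,\nu\,\Delta_h-\Delta t\,\mathcal T^{k+1}$. Both arguments are valid and both hinge on $\nu>0$ for irreducibility; the paper's is cleaner because positivity is decoupled from the existence of $u$, whereas yours requires checking that the identity $w^{k-1}_{i,j}=m^k_{i,j}\,|\widehat{[D_hu^{k-1}]}_{i,j}|^{(2-q)/(q-1)}\widehat{[D_hu^{k-1}]}_{i,j}$ still holds at hypothetical points with $m^k_{i,j}=0$ (it does, trivially, since then $w^{k-1}_{i,j}=0$).

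There is one genuine loose end in your argument. The assertion ``convex problem with affine constraints and finite value $\Rightarrow$ a Lagrange multiplier exists'' requires a constraint qualification, because the constraints $m\ge 0$ and $w\in K$ implicit in $\operatorname{dom}(\mathcal B+\mathcal F)$ are not affine. The subdifferential sum rule $\partial(\varphi+\iota_{\{\mathcal G=(0,\bar m)\}})=\partial\varphi+\operatorname{Im}(\mathcal G^*)$ needs $\operatorname{ri}(\operatorname{dom}\varphi)\cap\{\mathcal G=(0,\bar m)\}\neq\emptyset$. The paper secures this through Lemma~\ref{lem:feasibility}, which constructs a feasible pair $(\tilde m,\tilde w)$ with $\tilde m^k>0$ and $\tilde w^{k}\in\operatorname{int}(K)$, i.e.\ a Slater point; your feasible pair $(m,0)$ obtained from the discrete heat flow has $w=0\in\partial K$ and therefore does not serve this purpose. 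You should either reproduce the construction of Lemma~\ref{lem:feasibility} (shift any $\hat w$ with $B\hat w=-Am$ by a constant in each component to push it into $\operatorname{int}(K)$ without changing $B\hat w$), or invoke the surjectivity of $\mathcal G$ (which follows from the invertibility of $m\mapsto(Am,m^0)$, see \eqref{matrix_representation_a_id}) together with the fact that, once $m>0$ is known, $\partial\varphi$ is non-empty at the minimizer. Either fix is short, but without it the passage to \eqref{stationary_condition} is not justified.
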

In order to prove the result above, let us first show a lemma that implies the feasibility of the constraints in \eqref{VARdis}.
\begin{lemma}
\label{lem:feasibility}
There exists $(\tilde{m},\tilde{w}) \in \mathcal{M}\times\mathcal{W}$ such that 

\begin{equation}\ba{l}
\G(\tilde{m},\tilde{w})=(0,\bar{m}), \quad \tilde{w}_{i,j}^k\in \text{{\rm int}}(K)  \hspace{0.3cm}\forall  \; i, \; j =1, \hdots, N_h-1, \; \; k=1, \hdots, N_T-1, \\[6pt]
 \tilde{m}_{i,j}^k>0, \hspace{0.3cm}\forall  \; i, \; j =1, \hdots, N_h-1, \; \; k=1, \hdots, N_T.
\ea
\end{equation}
\end{lemma}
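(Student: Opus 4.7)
The plan is to decouple the construction: I will pick $\tilde{m}$ so that $A\tilde{m}=0$ and $\tilde{m}^0=\bar{m}$, and then pick $\tilde{w}$ so that $B\tilde{w}=0$ with every component lying in $\mathrm{int}(K)$; this makes $\G(\tilde{m},\tilde{w})=(0,\bar{m})$ automatic.

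For $\tilde{m}$, I would set $\tilde{m}^0:=\bar{m}$ and define $\tilde{m}^{k+1}$ recursively by solving the implicit heat step
$$
(I-\nu\Delta t\,\Delta_h)\tilde{m}^{k+1}=\tilde{m}^k,\qquad 0\le k\le N_T-1.
$$
The matrix $I-\nu\Delta t\,\Delta_h$ is a $Z$-matrix whose diagonal entries $1+4\nu\Delta t/h^2$ strictly dominate the sum of its (non-positive) off-diagonal entries, so it is a strictly diagonally dominant $M$-matrix, invertible with non-negative inverse. Moreover, the nearest-neighbour pattern on $\TT_h^2$ is irreducible, so this inverse is in fact componentwise strictly positive. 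Since $\bar{m}\ge 0$ and $h^2\sum_{i,j}\bar{m}_{i,j}=\int_{\TT^2}m_0(x)\,\mathrm{d}x=1$, the vector $\bar{m}$ is non-negative and non-zero, and applying the inverse step by step yields $\tilde{m}^k_{i,j}>0$ for every $(i,j)$ and every $k\ge 1$. By construction, $A\tilde{m}=0$.

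For $\tilde{w}$, I would simply take it constant in $(i,j,k)$, with components $\tilde{w}^{k,(1)}_{i,j}=\tilde{w}^{k,(3)}_{i,j}=1$ and $\tilde{w}^{k,(2)}_{i,j}=\tilde{w}^{k,(4)}_{i,j}=-1$. Then $\tilde{w}^k_{i,j}\in\mathrm{int}(K)$ everywhere, and since $D_1$ and $D_2$ annihilate spatially constant functions, $B\tilde{w}=0$. Combining the two pieces yields $\G(\tilde{m},\tilde{w})=(A\tilde{m}+B\tilde{w},\tilde{m}^0)=(0,\bar{m})$, which gives the required feasible pair.

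The main obstacle is the strict positivity of $\tilde{m}^k$ everywhere for $k\ge 1$, which is where the hypothesis $\nu>0$ genuinely enters: it is a discrete strong maximum principle, whose proof reduces to the irreducible $M$-matrix structure of $I-\nu\Delta t\,\Delta_h$ on the connected torus graph. All remaining pieces are essentially algebraic identities satisfied by the chosen constant vector field $\tilde{w}$.
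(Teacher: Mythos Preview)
Your proof is correct, but it follows a different route from the paper's.

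The paper chooses $\tilde{m}$ much more simply: it sets $\tilde{m}^0=\bar{m}$ and $\tilde{m}^k_{i,j}\equiv 1$ for all $k\ge 1$. Positivity is then trivial. Of course $A\tilde{m}\neq 0$ in general, but since $h^2\sum_{i,j}\tilde{m}^k_{i,j}=1$ for every $k$, one checks that $A\tilde{m}$ has zero spatial mean at each time level and hence lies in $\mathrm{Im}(B)$ (using the characterization $\mathrm{Im}(B)=\mathrm{Ker}(B^\ast)^\perp$ established just before). Any preimage $\hat{w}$ is then shifted by suitable spatial constants to push it into $\mathrm{int}(K)$, which does not change $B\hat{w}$.

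Your decoupled construction ($A\tilde{m}=0$ via the implicit heat step, $B\tilde{w}=0$ via a constant field) is cleaner in that it never appeals to the description of $\mathrm{Im}(B)$ and produces $\tilde{w}$ explicitly. The trade-off is that your positivity argument genuinely uses $\nu>0$: if $\nu=0$ your recursion gives $\tilde{m}^k=\bar{m}$, which may vanish on some cells. The paper's construction is insensitive to $\nu$ and therefore also covers the first-order case $\nu=0$ (cf.\ the remark following the existence lemma). Since the lemma is stated without restriction on $\nu$, this is a minor loss of generality in your version; for the purposes of Theorem~\ref{thm:existence}, where $\nu>0$ is assumed, your argument is fully adequate.
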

\begin{proof}
Let us define $\tilde{m}^0_{i,j}:=\bar{m}_{i,j}$ and $\tilde{m}^{k}_{i,j}:= 1$ for all $k=1,\hdots, N_T$ and $i$, $j$. Since $h^2\sum_{i,j}\tilde{m}_{i,j}^k=1$ for all  $k=0,\hdots, N_T$, by \eqref{image_of_B} and the definition of $A$ we easily get that  $A\tilde{m} \in  \mbox{Im}(B)$. Therefore,  there exists $\hat{w} \in \mathcal{W}$ satisfying $\G(\tilde{m},\hat{w})=(0,\bar{m})$. Then,  given $\delta>0$, we  set  for all $k=0, \hdots, N_T-1$ and $i$, $j$  \small
$$
\tilde{w}^{k}_{i,j}:= \left(\hat{w}^{k,(1)}+\max_{i,j} \hat{w}^{k,(1)}_{i,j} +\delta,\hat{w}^{k,(2)}-\max_{i,j} \hat{w}^{k,(2)}_{i,j} -\delta, \hat{w}^{k,(3)}+\max_{i,j} \hat{w}^{k,(3)}_{i,j} +\delta, \hat{w}^{k,(4)}-\max_{i,j} \hat{w}^{k,(4)}_{i,j} -\delta\right),
$$ \normalsize
which satisfies $\tilde{w}_{i,j}^k \in \mbox{int}(K)$  and  $(B\tilde{w})^k= (B\hat{w})^k$. The result follows.
\end{proof}
Now, we prove the existence of solutions to \eqref{VARdis}. 
\begin{lemma}\label{existence_of_solutions_discrete_variational_problems} Problem \eqref{VARdis} admits at least one solution $(m^{h,\Delta t}, w^{h,\Delta t})$ and every such solution satisfies 
$(m^{h,\Delta t})^{k}_{i,j}>0$ for all $k=1, \hdots, N_T$,  $i$, $j =0, \hdots, N_h -1$.
\end{lemma}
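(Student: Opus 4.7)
For existence, I would apply the direct method. The feasible set is non-empty by Lemma \ref{lem:feasibility} and the cost $\mathcal{B}+\mathcal{F}$ is convex and lower semi-continuous on $\mathcal{M}\times\mathcal{W}$: $\widehat{b}$ is the perspective of the convex map $v\mapsto |v|^q/q$ restricted to $K$ and extended by $+\infty$, hence jointly convex with closed epigraph, while $F(x,\cdot)$ and $G(x,\cdot)$ are convex as primitives of the non-decreasing functions $f(x,\cdot)$, $g(x,\cdot)$ on $\mathbb{R}_+$. Given a minimizing sequence $(m_n,w_n)$, the conservation-of-mass identity \eqref{conservation_of_mass_property} together with $m_n\geq 0$ (forced by $\mathcal{B}(m_n,w_n)<+\infty$) gives the uniform bound $0\leq m_n^k\leq 1/h^2$. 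Each pointwise term $\widehat{b}(m_n^k,w_n^{k-1})$ is bounded by the total cost, and the growth $\widehat{b}(m,w)\geq |w|^q/(qm^{q-1})$, together with the fact that $m_n^k=0$ forces $w_n^{k-1}=0$, yields a uniform $\ell^\infty$ bound on $w_n$. Finite-dimensional compactness and lower semi-continuity of the cost then produce a minimizer $(m^{h,\Delta t},w^{h,\Delta t})$.

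For strict positivity, let $(m^*,w^*)$ be any minimizer, so that $m^*\geq 0$, $w^{*,k}\in K$, and $w^{*,k-1}_{i,j}=0$ whenever $m^{*,k}_{i,j}=0$. Since $\mathcal{G}(m,w)=(0,\bar m)$ is affine and Lemma \ref{lem:feasibility} furnishes a point in the relative interior of $\mathrm{dom}(\mathcal{B}+\mathcal{F})$, convex duality supplies a Lagrange multiplier $u^*\in\mathcal{U}$ for which the subdifferential form of the KKT conditions holds. Written out componentwise using \eqref{adjoints_operators} and the convention \eqref{convention} at the zero-velocity points, these inclusions coincide with the scheme \eqref{MFGdis} for $(u^*,m^*)$; in particular $m^*$ solves the discrete Fokker--Planck equation
\[
D_tm^{*,k}-\nu(\Delta_h m^{*,k+1})_{i,j}-\mathcal{T}_{i,j}(u^{*,k},m^{*,k+1})=0,\qquad m^{*,0}=\bar m.
\]
Rewriting this implicitly as $\mathbf{M}(u^{*,k})\,m^{*,k+1}=m^{*,k}/\Delta t$, with $\mathbf{M}(u^{*,k}):=\tfrac1{\Delta t}I-\nu\Delta_h-\mathcal{T}(u^{*,k},\cdot)$, the matrix $\mathbf{M}(u^{*,k})$ has positive diagonal and non-positive off-diagonal entries (the Laplacian contributes $4\nu/h^2$ on the diagonal and $-\nu/h^2$ off-diagonal; the upwinded transport coefficients built from $(D_1u^*)^{\pm}$, $(D_2u^*)^{\pm}$ carry the correct signs), is strictly diagonally dominant because $\nu>0$, and is irreducible on the connected periodic grid $\mathbb{T}^2_h$. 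Hence $\mathbf{M}(u^{*,k})^{-1}$ has strictly positive entries, so $m^{*,k}\geq 0$ not identically zero forces $m^{*,k+1}>0$ at every node; an induction starting from $h^2\sum_{i,j}\bar m_{i,j}=1>0$ yields $m^{*,k}_{i,j}>0$ for every $k\geq 1$.

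The main obstacle I anticipate is the careful treatment of the subdifferential of $\widehat{b}$ at the degenerate points $(0,0)$: since $\widehat{b}$ is positively $1$-homogeneous, this subdifferential is an entire convex cone rather than a single vector, and one must verify that the resulting multi-valued KKT inclusion collapses to the single-valued scheme \eqref{MFGdis} when combined with the convention \eqref{convention}, so that the M-matrix argument is applicable at every node and the reduced linear operator $\mathbf{M}(u^{*,k})$ is legitimately obtained after eliminating $w^*$.
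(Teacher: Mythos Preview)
Your existence argument is essentially the paper's: direct method, mass conservation to bound $m$, growth of $\widehat b$ to bound $w$, lower semicontinuity to pass to the limit.

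For the strict positivity, however, you take a very different route from the paper, and yours has both a technical error and an unnecessary detour. The paper's argument is purely primal and takes five lines: if $m^{k}_{i,j}=0$ for some $k\ge 1$, then $w^{k-1}_{i,j}=0$, and the single constraint equation $(Am+Bw)^{k-1}_{i,j}=0$ reads
\[
-\frac{m^{k-1}_{i,j}}{\Delta t}-\frac{\nu}{h^2}\big(m^{k}_{i+1,j}+m^{k}_{i-1,j}+m^{k}_{i,j+1}+m^{k}_{i,j-1}\big)
=\frac{w^{k-1,(1)}_{i-1,j}}{h}-\frac{w^{k-1,(2)}_{i+1,j}}{h}+\frac{w^{k-1,(3)}_{i,j-1}}{h}-\frac{w^{k-1,(4)}_{i,j+1}}{h}.
\]
The left side is $\le 0$ (since $m\ge 0$) and the right side is $\ge 0$ (since $w\in K$), so every term vanishes; iterating over neighbours gives $m^{k}\equiv 0$, contradicting \eqref{conservation_of_mass_property}. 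No multiplier $u$, no M-matrix, no subdifferential issue at $(0,0)$---the obstacle you flag simply does not arise.

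Your approach, by contrast, first invokes duality to produce $u^*$, then tries to rewrite the Fokker--Planck constraint as $\mathbf{M}(u^{*,k})m^{*,k+1}=m^{*,k}/\Delta t$ and appeal to inverse-positivity. Two remarks. First, $\mathbf{M}(u^{*,k})$ is \emph{not} strictly row-diagonally dominant in general: the upwind transport coefficients at row $(i,j)$ involve $|\widehat{[D_hu^*]}_{i',j'}|^{(2-q)/(q-1)}$ at \emph{neighbouring} nodes, so the row-sum of the transport block need not vanish and can swamp the $1/\Delta t$ slack. What is true (by discrete mass conservation) is that the \emph{column} sums of $\mathbf{M}(u^{*,k})$ equal $1/\Delta t$, so $\mathbf{M}(u^{*,k})^{\!\top}$ is strictly diagonally dominant; together with the $Z$-matrix structure and irreducibility (here $\nu>0$ is genuinely needed) this yields the M-matrix property by transposition. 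Second, the subdifferential obstacle you raise is in fact harmless for the FP half of the system: at nodes with $m^{*,k+1}_{i,j}=0$ one has $w^{*,k}_{i,j}=0$, which agrees with the formula $w=m\cdot(\cdots)(u)$ regardless of $u$, so the identity $(Bw^*)^k=-\mathcal{T}(u^{*,k},m^{*,k+1})$ still holds. Thus your argument can be repaired, but it is considerably heavier than the paper's, and it reverses the logical order used there (the paper first proves positivity directly, and only then exploits it to derive the full KKT system in Theorem~\ref{thm:existence}).
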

\begin{proof} Let $(m^n, w^n)$ be a minimizing sequence for \eqref{VARdis}. Lemma \ref{lem:feasibility} implies that $\mathcal{B}(\tilde{m},\tilde{w})+\mathcal{F}(\tilde{m}) <+\infty$. Therefore,  there exists a constant $C_1>0$ such that 
\be\label{minimizing_sequence_bounded_cost}\mathcal{B}(m^n,w^n)+\mathcal{F}(m^n) \leq C_1 \hspace{0.3cm} \mbox{for all $n\in \NN$.}\ee
 As a consequence, by definition of $\hat{b}$, $(m^n)_{i,j}^k \geq 0$ for all $i$, $j$ and $k$ and $(w^n)^k \in K$ for all $k$.  Since $Am^n + Bw^n=0$, relation \eqref{conservation_of_mass_property} implies that
$h^2 \sum_{i,j} (m^n)_{i,j}^k = 1$. In particular, there exists $C_2>0$ (independent of $n$) such that $\sup_{i,j,k} (m^n)_{i,j}^k \leq C_2$. Using that, if $(m^n)_{i,j}^k>0$, 
$$
\hat{b}((m^n)_{i,j}^k,(w^n)_{i,j}^k) \geq \frac{|(w^n)_{i,j}^k|^q}{q C_2^{q-1}},
$$
and that $\F(m^n)$ is uniformly bounded (because $F$ and $G$ are continuous and $m^n$ is bounded), relation \eqref{minimizing_sequence_bounded_cost} yields the existence of $C_3>0$  (independent of $n$) such that $\sup_{i,j,k} |(w^n)_{i,j}^k| \leq C_3$. Thus, there exists $(m^{h, \Delta t},w^{h, \Delta t}) \in \M \times \W$ such that, up to some subsequence, $m^n \to m^{h, \Delta t}$ and $w^n \to w^{h, \Delta t}$ as $n\to \infty$. Since $\G(m^n,w^n)=(0,\bar{m})$ we obtain that $\G(m^{h, \Delta t},w^{h, \Delta t})=(0,\bar{m})$, The lower semicontinuity of $\B + \F$ implies that 
$$ 
\mathcal{B}(m^{h, \Delta t},w^{h, \Delta t})+\mathcal{F}(m^{h, \Delta t}) \leq \lim_{n\to \infty}\mathcal{B}(m^n,w^n)+\mathcal{F}(m^n),
$$
which implies that $(m^{h, \Delta t},w^{h, \Delta t})$ solves \eqref{VARdis}. Finally, if $(m,w)\in \M \times \W$ solves \eqref{VARdis} and $m^{k}_{i,j}=0$ for some $i$, $j$ and $k=1, \hdots, N_T$, then, by the definition of $\B$, we must have that $w^{k-1}_{i,j}=0$. Thus, the constraint $(Am+Bw)^{k-1}_{i,j}=0$ can be written as 
$$\ba{c}
-\frac{m_{i,j}^{k-1}}{\Delta t} - \frac{\nu}{h^2}( m^{k}_{i+1,j}+m^{k}_{i-1,j}+m^{k}_{i,j+1} + m^{k}_{i,j-1})\\[8pt]
= \frac{w^{k-1,(1)}_{i-1,j}}{h} - \frac{w^{k-1,(2)}_{i+1,j}}{h} +\frac{w^{k-1,(3)}_{i,j-1}}{h} - \frac{w^{k-1,(4)}_{i,j+1}}{h}.
\ea
$$
Since the left hand side above is non-positive and the right hand side is non-negative (by definition of $K$), we deduce that all the terms above are zero.  By repeating the argument at the indexes neighboring $(i,j)$, we deduce that $m^k\equiv 0$ and so $h^2 \sum_{i,j}m_{i,j}^k=0$ which, by \eqref{conservation_of_mass_property},  contradicts $\G(m,w)=(0,\bar{m})$. The result follows. 
\end{proof}
\begin{remark}
Notice that the proof of the existence of a solution to $(P_{h,\Delta t})$ also works when $\nu=0$. 
\end{remark}

%\begin{lemma}
%\label{lem:existence}
%There exists $(\tilde{m},\tilde{w}) \in \mathcal{M}\times\mathcal{W}$ such that 
%
%\begin{equation}
%G(\tilde{m},\tilde{w})=(0,\bar{m}), \quad \tilde{w}_{i,j}^k\in \text{ int}(K) \quad \tilde{m}_{i,j}^k>0, \forall 1\leq k\leq N_T.
%\end{equation}
%\end{lemma}

\begin{proof}[Proof of Theorem \ref{thm:existence}] By Lemma \ref{existence_of_solutions_discrete_variational_problems} we know that there exists a solution $(m^{h,\Delta t}, w^{h,\Delta t})$ to \eqref{VARdis} and $m^{h,\Delta t}_{i,j}>0$ for all $i$, $j$. Thus, in order to conclude it suffices to show the existence of $u^{h,\Delta t}$ such that \eqref{MFGdis} holds true. For notational convenience we will omit the superindexes $h$ and $\Delta t$. Define the {\it Lagrangian} $\L:=\M \times \W \times \U \times \RR^{N_h \times N_h} \to \RR \cup \{+\infty\}$, associated to \eqref{VARdis}, as 
\be\label{e:lagrang}
\ba{rcl}
\L(m,w,u,\lambda)&:=& \mathcal{B}(m,w)+\mathcal{F}(m) - \langle u,Am + Bw\rangle - \langle \lambda, m^0- \bar{m} \rangle \\[6pt]
\; &=& \mathcal{B}(m,w)+\mathcal{F}(m) - \langle A^\ast u,m\rangle  - \langle B^\ast 
u,w\rangle - \langle \lambda, m^0- \bar{m} \rangle.
\ea
\ee
 Note that the linear mapping $ \M \ni m \mapsto (Am,m) \in \U \times  \RR^{N_h \times N_h}$ is invertible as it is shown by its matrix representation (see \eqref{matrix_representation_a_id} in the next section). As a consequence $\G$ is surjective and,  hence, by standard arguments, there exists $(u,\lambda) \in  \U \times  \RR^{N_h \times N_h}$ such that 
\be\label{stationary_condition}
\ba{l}
0=\partial_{m_{i,j}^{k}} \L(m,w,u,\lambda)= - \frac{1}{q'} \frac{|w_{i,j}^{k-1}|^q}{(m_{i,j}^{k})^q}+f(x_{i,j},m_{i,j}^{k}) - [A^{\ast}u]_{i,j}^{k} \hspace{0.6cm} \forall \; k=1,\hdots,N_{T}-1, \; \forall \; i,j, \\[6pt]
0=\partial_{m_{i,j}^{0}} \L(m,w,u,\lambda)= -\lambda_{i,j}-  [A^{\ast}u]_{i,j}^{0} \hspace{0.6cm} \forall \; i,j,\\[6pt]
0 =\partial_{m_{i,j}^{N_T}} \L(m,w,u,\lambda)=- \frac{1}{q'} \frac{|w_{i,j}^{N_T-1}|^q}{(m_{i,j}^{N_T})^q} + f(x_{i,j},m_{i,j}^{N_T})+ \frac{1}{\Delta t} g(x_{i,j}, m_{i,j}^{N_T})- [A^{\ast}u]_{i,j}^{N_T}\hspace{0.6cm} \forall \; i,j,\\[8pt]
0 \in \partial_{w_{i,j}^{k-1}} \L(m,w,u,\lambda)= |w_{i,j}^{k-1}|^{q-2} \frac{w_{i,j}^{k-1}}{(m_{i,j}^k)^{q-1}}-[B^{\ast}u]_{i,j}^{k-1} + N_{K}(w_{i,j}^{k-1}) \hspace{0.3cm} \forall \; k=1,\hdots,N_{T}, \; \forall \; i,j,
\ea\ee
where we have used  definition \eqref{definition_b_hat} and that $m_{i,j}^{k}>0$ for all $k=1, \hdots, N_T$ and all $i$, $j$.  Defining $u_{i,j}^{N_T}:= g(x_{i,j}, m_{i,j}^{N_T})$, by the last relation in \eqref{adjoints_operators}, the third relation in \eqref{stationary_condition} can be rewritten as  
$$-D_t u_{i,j}^{N_{T}-1}-\nu(\Delta_hu^{N_T-1})_{i,j}+\frac{1}{q'} \frac{|w_{i,j}^{N_T-1}|^q}{(m_{i,j}^{N_T})^q}=    f(x_{i,j},m_{i,j}^{N_T}),$$
and hence, by the second relation in \eqref{adjoints_operators} and the first relation in  \eqref{stationary_condition}, we have that 
\be\label{pre_equation_in_m_in_terms_of_w}
-D_t u_{i,j}^{k}-\nu(\Delta_hu^{k})_{i,j}+\frac{1}{q'} \frac{|w_{i,j}^{k}|^q}{(m_{i,j}^{k+1})^q}=    f(x_{i,j},m_{i,j}^{k+1}) \hspace{0.3cm} \forall \; k=0, \hdots, N_{T}-1, \; \;  \forall \; i, \; j.
\ee
The last relation in \eqref{stationary_condition} yields that   for all $k=1, \hdots, N_T$ and all $i$, $j$
$$
\left\{ \ba{ll} \frac{(m_{i,j}^k)^{q-1}}{|w_{i,j}^{k-1}|^{q-2}} [B^{\ast}u]^{k-1}_{i,j} \in w_{i,j}^{k-1} + N_{K}(w_{i,j}^{k-1}) & \; \mbox{if } \; w_{i,j}^{k-1} \neq 0, \\[6pt]
[B^{\ast}u]^{k-1}_{i,j} \in  N_{K}(0)& \; \mbox{if } \; w_{i,j}^{k-1} = 0,
\ea\right.
$$
which, by \eqref{adjoints_operators} and under the convention \eqref{convention}, is equivalent to  
\be\label{expression_for_w} 
w_{i,j}^{k-1}= m_{i,j}^k |P_{K}(-[D_hu]_{i,j}^{k-1})|^{\frac{2-q}{q-1}} P_{K}(-[D_hu]_{i,j}^{k-1})=m_{i,j}^k | \widehat{[D_hu^{k-1}]}_{i,j}|^{\frac{2-q}{q-1}} \widehat{[D_hu^{k-1}]}_{i,j}.
\ee
Shifting the index $k$, the expression above yields 
$$
\frac{1}{q'} \frac{|w_{i,j}^{k}|^q}{(m_{i,j}^{k+1})^q}= \frac{1}{q'}  |\widehat{[D_hu^{k}]}_{i,j}|^{q'} \hspace{0.3cm} \forall \; k=0, \hdots, N_T-1, \; \; \forall \; i, \; j,
$$
which, combined with \eqref{pre_equation_in_m_in_terms_of_w}, implies the first equation in \eqref{MFGdis}. The second equation in \eqref{MFGdis} is a consequence of $Am+Bw=0$ and the fact  that \eqref{expression_for_w} provides the identity 
$$
(Bw)_{i,j}^{k}=- \mathcal{T}_{i,j}(u^k,m^{k+1}) \hspace{0.3cm} \forall \; k=0, \hdots, N_T-1, \; \; \forall \; i, \; j.
$$
The result follows. 
\end{proof}
\begin{remark} {\rm(i)} The proof of the existence of solutions to \eqref{MFGdis} in Theorem \ref{thm:existence} provides an alternative argument   to the one in \cite{MR2679575}, based on Brouwer fixed-point theorem. \smallskip\\
{\rm(ii)}{\rm(Uniqueness)} If $f(x,\cdot)$ and $g(x,\cdot)$ are increasing, with one of them being strictly increasing, then   \eqref{MFGdis} has a unique solution. Indeed, under this assumption,  the cost functional in \eqref{VARdis} is convex w.r.t. $(m,w)$ and strictly convex w.r.t. $m$. It is easy to check that this implies that if $(m_1,w_1)$ and $(m_2,w_2)$ are two solutions of  \eqref{VARdis} then $m_1=m_2$. Using this fact and the definition of $\hat{b}$ {\rm(}see \eqref{definition_b_hat}{\rm)}, we also get that $w_1=w_2$. Thus, under this monotonicity assumption, the solution $(m^{h,\Delta t}, w^{h,\Delta t})$ to  \eqref{VARdis} is unique. Having this result, the uniqueness of $u^{h,\Delta t}$ follows directly from  \cite[Lemma 1]{MR2679575}. 
\end{remark}

\section{A primal-dual algorithm to solve \eqref{VARdis}}\label{chambolle_pock_section}
 
	As discussed in \cite{BAKS}, for solving the optimization problem 
	\begin{equation}
	\label{e:primal}
	\min_{y\in\RR^N}{\varphi(y)+\psi(y)},
	\end{equation}
	and its dual
		\begin{equation}
	\label{e:dual}
	\min_{\sigma\in\RR^N}{\varphi^*(-\sigma)+\psi^*(\sigma)},
	\end{equation}
	where $\varphi\colon\RR^N\to \RR \cup \{+\infty\}$ and $\psi\colon\RR^N\to \RR \cup 
	\{+\infty\}$ are 
	convex l.s.c. proper functions, methods in \cite{M+S,MR2782122,ChenT,ADMM,ADMM2} 
	can be applied with 
	guaranteed
	convergence under mild assumptions. In \cite{BAKS}, devoted to the stationary case, the 
	method proposed in 
	\cite{MR2782122} has the best performance when the viscosity parameter is small or zero.
This method is inspired by the first-order optimality conditions satisfied by a solution 
$(\hat{y},\hat{\sigma})$ to 
\eqref{e:primal}-\eqref{e:dual} under standard 
qualification conditions,
which reads (see \cite[Theorem~8]{MR0211759})
\begin{equation}
\label{e:caract}
\begin{cases}
-\hat{\sigma}\in\partial\varphi(\hat{y})\\
\hat{y}\in\partial\psi^*(\hat{\sigma})
\end{cases}\Leftrightarrow\quad
\begin{cases}
\hat{y}-\tau\hat{\sigma}\in\tau\partial\varphi(\hat{y})+\hat{y}\\
\hat{\sigma}+\gamma\hat{y}\in\gamma\partial\psi^*(\hat{\sigma})+\hat{\sigma}
\end{cases}\Leftrightarrow\quad
\begin{cases}
\prox_{\tau\varphi}(\hat{y}-\tau\hat{\sigma})=\hat{y}\\
\prox_{\gamma\psi^*}(\hat{\sigma}+\gamma\hat{y})=\hat{\sigma},
\end{cases}
\end{equation}
where  $\gamma>0$ and $\tau>0$ are arbitrary and, given a l.s.c. convex proper function 
$\phi\colon\RR^N\to\RX$, 
$$\prox_{\gamma\phi}x := 
\mbox{argmin}_{y\in\RR^N}\left\{\phi(y)+\frac{|y-x|^2}{2\gamma}\right\}= (I+\partial (\gamma \phi))^{-1}(x) \hspace{0.5cm} 
\forall \; x\in \RR^N.$$
Given $\theta\in[0,1]$, $\tau$ and $\gamma$ satisfying $\tau\gamma<1$, and starting 
points
	$(y^0,\tilde{y}^0,\sigma^0)\in\RR^{N}\times\RR^{N}\times\RR^{M}$, 
the iterates $\{(y^k,\sigma^k)\}_{k\in\NN}$ generated by
	\be\label{pasosChamPock}
	\ba{rcl} \sigma^{k+1}&:=& 
	\prox_{\gamma\psi^*}(\sigma^{k}+ \gamma\tilde{y}^{k}),\\[6pt]
	y^{k+1}&:=&
	\prox_{\tau\varphi}(y^{k}-\tau \sigma^{k+1}),\\[6pt]
	\tilde{y}^{k+1}&:=& 
	y^{k+1}+ \theta(y^{k+1}-y^{k})
	\ea\ee
	converge to a primal-dual solution $(\hat{y},\hat{\sigma})$ to 
	\eqref{e:primal}-\eqref{e:dual} (see, e.g., \cite{MR2782122}).
	
In the case under study, the equations of the time-dependent discretization are very similar 
to their stationary counterparts (see \cite{BAKS}). 
Specifically, 
%the first parts of the time-dependent matrices 
%encode the boundary condition of $m$ and the rest of the matrices are around $N_T$ times 
%larger and are composed of blocks, some of which closely resemble the corresponding 
%stationary matrices.
%The equation of the time-dependent algorithm will look like a lot to the stationary case ones, only the matrices are going to change and especially to be bigger. Let's introduce them:
%\begin{equation}
%A=\begin{pmatrix}
%L+\frac{1}{\Delta t}Id_{N_h^2} & 0 & \cdots & 0
%\\
%-\frac{1}{\Delta t}Id_{N_h^2} & L+\frac{1}{\Delta t}Id_{N_h^2}
%& 0& \vdots
%\\
%0 & -\frac{1}{\Delta t}Id_{N_h^2} & L+\frac{1}{\Delta t}Id_{N_h^2} &0 
%\\
%\vdots & \ddots & \ddots & 
%\\
%0 & \cdots &  -\frac{1}{\Delta t}Id_{N_h^2} & L+\frac{1}{\Delta t}Id_{N_h^2}
%\end{pmatrix}
%\end{equation}
the discrete linear operators $A$ and $B$ defined in \eqref{definition_of_A_B}, by an abuse of 
notation, are represented by 
real matrices $A$ 
and $B$, of dimensions $({N_T \times N_h^2})\times({(N_T + 1) \times N_h^2})$ and $({N_T 
\times N_h^2})\times({N_T \times   4N_h^2}) $, respectively, given by
\begin{equation}
\label{e:defmA}
A:=\begin{pmatrix}
-\frac{1}{\Delta t}\Id_{N_h^2} & \nu L+\frac{1}{\Delta t}\Id_{N_h^2}
& 0& \cdots & 0
\\
0 & -\frac{1}{\Delta t}\Id_{N_h^2} & \nu L+\frac{1}{\Delta t}\Id_{N_h^2} & \ddots & \vdots 
\\
\vdots & \ddots & \ddots & \ddots & 0
\\
0 & \cdots & 0 & -\frac{1}{\Delta t}\Id_{N_h^2} & \nu L+\frac{1}{\Delta t}\Id_{N_h^2}
\end{pmatrix},
\end{equation}
and 
\begin{equation}
\label{e:defmB}
B:=\begin{pmatrix}
M &0&\cdots&0
\\
0& M &\cdots&0\\
\vdots&&\ddots &\vdots\\
0&\cdots&0&M
\end{pmatrix},
\end{equation}
where $L\in\mathcal{M}_{{N_h^2} \, , \; {N_h^2}} (\RR)$ is the matrix that represents $-\Delta_h$ on the 
torus $\mathbb{T}_h^2$ and $M \in \mathcal{M}_{{N_h^2} \, , \; {4 N_h^2}} (\RR)$ is the 
matrix representing the discrete divergence. Denoting by $\tilde{A}$ and $\tilde{B}$ the 
$({(N_T+1) \times N_h^2})\times({(N_T + 1) \times N_h^2})$ and $({(N_T+1) 
	\times N_h^2})\times({N_T \times   4N_h^2}) $ real matrices 
\begin{equation}\label{matrix_representation_a_id}
\tilde{A} := \begin{pmatrix}
\Id_{N_h^2}\!\!&0&\cdots& 0\\
& A \\
\end{pmatrix}\quad\text{and}\quad 
\tilde{B} := \begin{pmatrix}
0&\cdots& 0\\
& B \\
\end{pmatrix},
\end{equation}
the constraint  $\G(m,w)=(0,\bar{m})$ in \eqref{VARdis} can be  rewritten as 
$C(m,w)=(\bar{m},0)$, where $C:=[\tilde{A}\,|\,\tilde{B}]$. 
\begin{remark} 
	\begin{enumerate}
		\item[{\rm(i)}] The matrix $\tilde{A}$ is block lower triangular with invertible diagonal 
		blocks 
		and, hence, it is invertible. Indeed, the first diagonal block $\Id_{N_h^2}$ is obviously 
		invertible and the other blocks, given by $\nu L+\frac{1}{\Delta t}\Id_{N_h^2}$, are also 
		invertible because they are strictly diagonally dominant. 
		\item[{\rm(ii)}]  Since $\tilde{A}$ is invertible, the matrix 
		\begin{equation}
		\label{e:defQ}
			Q:=CC^*=\tilde{A}\tilde{A}^*+\tilde{B}\tilde{B}^*
		\end{equation} is positive definite and, hence, 
		invertible.
	\end{enumerate}

\end{remark}
Therefore,  
\eqref{VARdis} is a 
particular instance of \eqref{e:primal} with 
$$\varphi (m,w):=
\mathcal{B}(m,w)+\mathcal{F}(m), \hspace{0.8cm} \psi 
(m,w):=\iota_{\ker C+\{({m}_f,{w}_f)\}}(m,w),$$ 
where $({m}_f,{w}_f)$
is a feasible vector (provided for instance by Lemma \ref{lem:feasibility}), and   $\iota_{\ker C+\{({m}_f,{w}_f)\}}$ is the function defined as $0$ for all $(m,w)\in \ker C+\{({m}_f,{w}_f)\}$ and $+\infty$, otherwise. 
%Given that the Chambolle-Pock's splitting obtained the best performance in 
%the stationary case, we expect to obtain a similar result in the time-dependent case under 
%study. 

Since $\prox_{\gamma\psi*}=\Id-\gamma \prox_{\psi/\gamma}\circ(\Id/\gamma)=\Id-\gamma 
\prox_{\psi}\circ(\Id/\gamma)$ (see e.g. \cite[Section 24.2]{MR3616647})
and  
$$\prox_{\psi}\colon(m,w)\mapsto 
(m,w)-C^*Q^{-1}(C(m,w)-{(\bar{m},0)}),$$
we have
$$\prox_{\gamma\psi*}\colon(m,w)\mapsto 
C^*Q^{-1}(C(m,w)-\gamma{(\bar{m},0)}),$$
where $Q$ is defined in \eqref{e:defQ}.
By setting  
$y^0=(m^0,w^0)$, 
 $\tilde{y}^0=(\widetilde{m}^0, \widetilde{w}^0)$, $\sigma^0=(n^0,v^0)\in\RR^{N_T\times 
N_h^2}\times\RR^{N_T(4 N_h^2)}$,
\eqref{pasosChamPock} becomes
\begin{equation} \label{eq_CP}
\left\{
\begin{aligned}
&z^{[l+1]} =-Q^{-1}\left(\tilde{A}(n^{[l]} + \gamma 
\tilde{m}^{[l]})+\tilde{B}(v^{[l]} + \gamma \tilde{w}^{[l]})-\gamma(\bar{m},0)\right),\\
&\begin{pmatrix} n^{[l+1]} \\ v^{[l+1]} \end{pmatrix} =
\begin{pmatrix} \tilde{A}^*z^{[l+1]} \\ \tilde{B}^*z^{[l+1]}\end{pmatrix}, \\
&\begin{pmatrix} m^{[l+1]} \\ w^{[l+1]} \end{pmatrix} =
\prox_{\tau \varphi} \begin{pmatrix} m^{[l]} + \tau n^{[l+1]} \\ w^{[l]} + \tau v^{[l+1]} 
\end{pmatrix}, \\
&\begin{pmatrix} \tilde{m}^{[l+1]} \\ \tilde{w}^{[l+1]} \end{pmatrix} =
 \begin{pmatrix} m^{[l+1]} +\theta (m^{[l+1]}-m^{[l]}) \\ w^{[l+1]} +\theta (w^{[l+1]}-w^{[l]}) 
 \end{pmatrix},
\end{aligned}
\right.
\end{equation}
and, if $\gamma\tau<1$, the convergence of $(m^{[l]},w^{[l]})$ to a solution 
$(\hat{m},\hat{w})$ to \eqref{VARdis} is guaranteed together with the convergence of
$(n^{[l]},v^{[l]})$ to some
$(\hat{n},\hat{v})$ as $l\to\infty$. In order to compute the
Lagrange multiplier $\hat{u}\in \U$, which solves the first equation in
\eqref{MFGdis}, note that 
\eqref{e:lagrang} can be written equivalently as
\be\label{e:lagrang2}
\ba{rcl}
\L(m,w,u,\lambda)&:=& \varphi(m,w)- \langle (\lambda,u),\tilde{A}m + \tilde{B}w\rangle 
+ \langle \lambda, \bar{m} \rangle\\[6pt]
\; &=& \varphi(m,w)- \left\langle \begin{pmatrix}
	\tilde{A}^*\\
	\tilde{B}^*
\end{pmatrix}(\lambda,u),\begin{pmatrix}
m\\
w
\end{pmatrix}\right\rangle+\langle \lambda,\bar{m} \rangle,
\ea
\ee
and the 
optimality condition yields
\begin{equation}
	\label{e:optconddual}
\begin{pmatrix}
\tilde{A}^*\\
\tilde{B}^*
\end{pmatrix}\hat{z}
\in\partial \varphi(\hat{m},\hat{w}),
\end{equation}
where $(\hat{m},\hat{w})$ is the primal solution and $\hat{z}=(\hat{\lambda},\hat{u})$. 
Therefore, in order to 
approximate 
$\hat{z}$, note that from \eqref{eq_CP} we have
\begin{equation}
\begin{pmatrix}
\frac{m^{[l]} - m^{[l+1]}}{\tau} + \tilde{A}^*z^{[l+1]} \\  
\frac{w^{[l]} -w^{[l+1]}}{\tau} +\tilde{B}^*z^{[l+1]}
\end{pmatrix}\in\partial\varphi(m^{[l+1]},w^{[l+1]})
\end{equation}
and, hence, since the algorithm generates converging sequences $m^{[l]}\to\hat{m}$ and
$w^{[l]}\to\hat{w}$ and $z^{[l]}\to \hat{z}:=-Q^{-1}(\tilde{A}(\hat{n} + \gamma 
\hat{m})+\tilde{B}(\hat{v} + \gamma \hat{w})-\gamma(\bar{m},0))$, the closedness of the 
graph of $\partial\varphi$ \cite[Proposition~20.38]{MR3616647} yields 
\eqref{e:optconddual} and, hence, a good approximation of 
$\hat{z}$ is $z^{[l]}$ for $l$ large enough.
For obtaining $[u^*]_{i,j}^{N_T}$,
a good approximation is $[u^{[l]}]_{i,j}^{N_T}=g(x_{i,j},[m^{[l]}]^{N_T}_{i,j})$.
\begin{remark} 
{\rm(i)} In order to obtain an efficient algorithm, the computation of $\prox_{\tau \varphi}$  in \eqref{eq_CP} should be fast. A complete study of  $\prox_{\tau \varphi}$ is presented in \cite[Section 3.2]{BAKS} showing that its computation depends on the resolution of a real equation, which can be efficiently solved. \smallskip\\
{\rm(ii)}  An important step in \eqref{eq_CP} is the efficient computation of the inverse of 
$Q$. Different preconditioning strategies to tackle this issue will be presented in the 
following section. 
\end{remark}

\section{Preconditioning strategies} \label{sec_precond}
At the beginning of each iteration of the primal-dual algorithm \eqref{eq_CP}, we require the solution of a linear system 
\begin{equation}\label{matrix_Q_inversion}
Qz = b.
\end{equation}
The purpose of this section is to discuss preconditioning strategies for the solution of this linear system. For the stationary setting discussed in \cite{BAKS}, the solution of such a system via direct methods such as the \textsl{backlash} (\texttt{mldivide}) command in MATLAB \footnote{\texttt{http://uk.mathworks.com/help/matlab/ref/mldivide.html}} was feasible for relatively fine meshes (up to the order of 100 nodes per space dimension). However, as shown in  Table \ref{tab_backslash}, introducing a temporal dimension and thus increasing the degrees of freedom to $N_h^2\times N_T$ significantly increases the computation time. Indeed, the use of \textsl{backlash} on fine space and time grids -- e.g. $128^2$ space grid points and $40$ time steps -- requires an amount of RAM that is prohibitive on the machine used for our performance tests~\footnote{Intel Core i7-4600U @ 2.7GHz, 16GB RAM}, leading to ``out of memory'' errors. We mitigate this problem by exploring the solution of \eqref{matrix_Q_inversion} via preconditioned iterative methods, which perform efficiently for finer space and time subdivisions and different viscosities.
\begin{table}[!htb]
\begin{minipage}{.5\linewidth}
	\caption*{(a) $N_T = 10$}
	\centering
	\begin{tabular}{|c||*{4}{c|}}\hline
		\backslashbox{$\nu$}{$N_h$}
		&	\makebox[2.5em]{16}&\makebox[2.5em]{32}&\makebox[2.5em]{64}&\makebox[2.5em]{128}\\\hline\hline
		$5 \times 10^{-4}$ 		&	7.12	&	62.7	&	452		&	4720	 \\\hline
		$5 \times 10^{-3}$		&	6.29	&	60.6	&	345		&	3690	 \\\hline
		$5 \times 10^{-2}$ 		&	1.96	&	18.3	&	113		&	1340	 \\\hline
		$0.5$					&	1.18	&	9.41	&	56.1	&	660	 	 \\\hline
	\end{tabular}
\end{minipage}%
\begin{minipage}{.5\linewidth}
	\caption*{(b) $N_T = 40$}
	\centering
	\begin{tabular}{|c||*{4}{c|}}\hline
		\backslashbox{$\nu$}{$N_h$}
		&	\makebox[2.5em]{16}&\makebox[2.5em]{32}&\makebox[2.5em]{64}&\makebox[2.5em]{128}\\\hline\hline
		$5 \times 10^{-4}$ 	&	24.2	&	569		&	15600	&	{{[OOM]}}	 \\\hline
		$5 \times 10^{-3}$	&	18.8	&	496		&	14200	& 	{{[OOM]}}	 \\\hline
		$5 \times 10^{-2}$ 	&	8.10	&	145		&	5000	&	{{[OOM]}}	 \\\hline
		$0.5$				&	4.50	&	72.3	&	2510	&	{{[OOM]}}	 \\\hline
	\end{tabular}
\end{minipage}%
\vskip 2mm
	\caption{MATLAB's \textsl{backslash} computation times (seconds) for a single linear system solved in \eqref{eq_CP} within the Chambolle-Pock algorithm under a tolerance equal to $10^{-4}$ in in normalized $\ell^2$-norm. For fine meshes {[OOM]} indicates an out of memory error for the tested architecture. }
\label{tab_backslash}
\end{table}

\noindent We begin by illustrating the difficulties associated to the conditioning of the system in \eqref{matrix_Q_inversion}. Table \ref{conditions} shows the condition number of the system for different space-time discretizations and viscositiy values. Without any precoditioner, the condition numbers of different discretizations scale up to $10^8$. The same Table shows that by selecting a suitable preconditioner, such as the \textsl{modified incomplete Cholesky factorization} \cite{Benzi2002} (\texttt{michol} in MATLAB), the conditioning of the system is improved by 4 orders of magnitude.

We have tested different choices of preconditioners and iterative methods for our problem. Since the matrix $Q$ in our setting is sparse, symmetric, and  positive-definite, we have implemented an \textsl{incomplete Cholesky factorization} with diagonal scaling, a \textsl{modified incomplete Cholesky factorization}, and multigrid preconditioning. As for the choice of the iterative method, our tests included both \textsl{preconditioned conjugate gradient} (\texttt{pcg}), and the \textsl{biconjugate gradient stabilized method} (\texttt{BiCGStab}). The interested reader will find in \cite[Chapters 6 and 8]{wendland} a thorough description of the aforementioned methods, and in the Appendix of this article  performance tables for %tables with the performance of
the different methods. 

Our findings suggest that the use of  an iterative \texttt{pcg} method, preconditioned by modified incomplete Cholesky factorization  %a modified incomplete Cholesky factorization in the preconditioned conjugate gradient method 
is satisfactory for small viscosities ($\nu\leq 0.05$).  However, this algorithm fails to converge for high viscosity systems on refined grids ($\nu = 0.5, \; N_T = 40, \; N_h \in \{64, 128\}$). Exchanging the \texttt{pcg} method by a \texttt{BiCGStab} algorithm preconditioned by modified incomplete Cholesky factorization slows down the process on finer grids, but allows for convergence in the failure cases of \texttt{pcg}: $\nu = 0.5, \; N_T = 40, \; N_h \in \{64, 128\}$.

In order to deal with (and exploit) the anisotropy of the system introduced by high viscosities, we have devised an algorithm consisting in a multigrid preconditioner with \texttt{BiCGStab} iterations akin to that described in Algorithm~\ref{precbcgstab}. It is the only among the tested methods which performs consistently for different viscosities and space-time discretizations. We discuss its implementation and assess its performance in the following section~\ref{ss_mg}.

\begin{table}[!htb]
	\caption{Condition numbers for $Q$ without preconditioning (a), and with modified incomplete Cholesky factorization preconditioning (b).}
	\begin{minipage}{.5\linewidth}
		\caption*{(a) No preconditioner (scaling $10^4$)}
		\centering
		\begin{tabular}{|c||*{3}{c|}}\hline
			\backslashbox[5em]{$\nu$}{$DoF$}
			&\makebox[3em]{$32^2\times 1$}&\makebox[3em]{$32^2\times 10$}&\makebox[3em]{$32^2\times 20$}\\\hline\hline
			$5 \times 10^{-5}$ 	&	4.290 	&	19.06	&	38.43	 \\\hline
			$5 \times 10^{-4}$ 	&	4.296 	&	19.25	&	39.10	 \\\hline
			$5 \times 10^{-3}$ 	&	4.751 	&	22.77	&	48.90	 \\\hline
			$5 \times 10^{-2}$ 	&	48.43   &	227.7   &	466.0	 \\\hline
			$0.5$ 				&	4399	&	19250    &	36540	 \\\hline
			%$5$					&   439300   &	1906500  &	3587700	 \\\hline
		\end{tabular}	
	\end{minipage}%
	\hfill
	\begin{minipage}{.5\linewidth}
		\caption*{(b) \texttt{michol} preconditioning (scaling $10^4$)}
		\centering
		\begin{tabular}{|c||*{3}{c|}}\hline
			\backslashbox[5em]{$\nu$}{$DoF$}
			&\makebox[3em]{$32^2\times 1$}&\makebox[3em]{$32^2\times 10$}&\makebox[3em]{$32^2\times 20$}\\\hline\hline
			$5 \times 10^{-5}$ 	&   0.04217		&	0.08535		&	0.1361	 \\\hline
			$5 \times 10^{-4}$ 	&	0.04218  	&	0.08612		&	0.1370	 \\\hline			
			$5 \times 10^{-3}$ 	&	0.04272		&	0.09325		&	0.1446	 \\\hline
			$5 \times 10^{-2}$ 	&	0.1025		&	0.2501		&	0.3579	 \\\hline
			$0.5$			 	&	1.255		&	2.743		&	3.770	 \\\hline
			%$5$					&	12.63		&	44.96		&	50.35	 \\\hline
		\end{tabular}
	\end{minipage}%
	\label{conditions}
\end{table}

\begin{algorithm}
\caption{Preconditioned \texttt{BiCGStab}}\label{precbcgstab}
\begin{algorithmic}
\State $x_l \leftarrow $BiCGStab($Q_l, b_l, P_L,P_R,x_0,tol$) 
\Procedure {BiCGStab}{$Q_l, b_l, P_L,P_R,x_0,tol$}
\State $r_0:=p_0:=Q_lx_0-b_l;\quad \hat r_0:=\hat p_0:=P_lr_0;\quad \hat\rho_0:=\langle r_0,r_0\rangle;\quad k:=0$
\While{$\|r_k\|\geq tol$}
\State $v_k:=QP_R\hat p_k$
\State $\hat v_k:=P_Lv_k$
\State $\hat \alpha_k:=\hat\rho_k/\langle\hat v_k,\hat r_0\rangle$
\State $s_{k+1}:=r_k-\hat\alpha_kv_k$
\State $\hat s_{k+1}:=P_L s_{k+1}$
\State $t_{k+1}:=AP_R\hat s_{k+1}$
\State $\hat t_{k+1}:=P_Lt_{k+1}$
\State $\hat \omega_{k+1}:=\langle\hat s_{k+1},\hat t_{k+1}\rangle/\langle\hat t_{k+1},\hat t_{k+1}\rangle$
\State $\hat x_{k+1}:=\hat x_k+\hat\alpha_k\hat p_k+\hat\omega_{k+1}\hat s_{k+1}$
\State $r_{k+1}:=s_{k+1}-\hat\omega_{k+1}t_{k+1}$
\State $\hat r_{k+1}:=\hat s_{k+1}-\hat\omega_{k+1}\hat t_{k+1}$
\State $\hat \rho_{k+1}:=\langle\hat r_{k+1},\hat r_0\rangle$
\State $\hat \beta_{k+1}:=(\hat\alpha_{k}/\hat\omega_{k+1})(\rho_{k+1}/\rho_k)$
\State $\hat p_{k+1}:=\hat r_{k+1}+\hat\beta_{k+1}(\hat p_k-\hat\omega_{k+1}\hat v_k)$
\State $k:=k+1$
\EndWhile
\Return $x_k:=P_R\hat x_k$
\EndProcedure
\end{algorithmic}
\end{algorithm}

\subsection{Multigrid preconditioner}\label{ss_mg}
We implement  a multigrid preconditioned algorithm for solving \eqref{matrix_Q_inversion}. We refer the reader to~\cite{Multigrid2000} for an introduction and an overview of multigrid methods. We briefly review the main concepts behind the method. Consider two linear systems $A_1 \bar{x}_1=b_1$ and $A_0 \bar{x}_0=b_0$, stemming from two discretizations of a linear PDE over the grids $G_1$ and $G_0$, respectively. Assume also that $G_1$ is a refinement of $G_0$. Loosely speaking, the main idea of the method is that in order to find a good approximation of the solution $\bar{x}_1$ on the finer grid, we first consider what is known as a {\it smoothing} step. This step consists in %on
 computing a few iterates $x_1^1, \; \hdots, x^{\eta_1}_1$ with  a standard indirect method, such as Jacobi or Gauss-Seidel, and to define the {\it residual} $r_1:=b_1 - A_1x^{\eta_1}_1$, which is shown to be {\it smoother} (less oscillatory) than the first residual $b_1-A_1 x^1$. Then, we consider in the coarser grid $G_0$ the second system $A_0 \bar{x}_0=b_0$ with $b_0=\hat{r}_1$, where $\hat{r}_1$ is the restriction of $r_1$ to $G_0$. Assuming that we can compute a good approximation of $\bar{x}_0$, which we still denote by $\bar{x}_0$, we then extend this solution to $G_1$ by using a linear interpolation.  Calling $e_1$ the resulting vector, we update $x^{\eta_1}_1$ by redefining   it as $x^{\eta_1}_1+e_1$ and we end the procedure by applying again a few iterations, say $\eta_2$, of a smoothing method initialized at  $x^{\eta_1}_1$. This last step is called {\it post smoothing}.

The previous paragraph introduced what is known as a {\it two grid iteration}. If we consider more grids $G_0$, $G_1$,$\hdots$, $G_\ell$, where for each $k=0, \hdots, \ell-1$, $G_{k} \subseteq G_{k+1}$, we can proceed similarly and obtain a better approximation of the solution to $A_\ell \bar{x}_\ell=b_\ell$.  As in the previous case, we begin with the finest grid $G_\ell$ and we perform $\eta_1$ smoothing steps to obtain the residual $r_\ell:=b_\ell - A_\ell x^{\eta_1}_{\ell}$ whose restriction to $G_{\ell-1}$ is denoted by $\hat{r}_\ell$. In this grid we consider the system $A_{\ell-1} x_{\ell-1}= \hat{r}_\ell$ and we perform again a smoothing step and a restriction of the residual to $G_{\ell-2}$. The procedure continues until we get to the coarsest grid $G_0$, where the solution $e_0$ to the corresponding linear system can be found easily (typically using a direct method). Next, the solution $e_1$ on the grid $G_1$ is corrected with the interpolation of $e_0$. Another post smoothing is performed to the corrected solution on $G_{1}$ and using its interpolation in the grid $G_{2}$ we correct the previous solution on this grid. The smoothing, interpolation and correction iterations end  once we arrive to the finest grid $G_\ell$ to obtain the final approximation of $\bar{x}_\ell$. The previous procedure is called a multigrid method with a {\it $V$-cycle}. An alternative, to obtain a more accurate solution, is to proceed as before going from $G_\ell$ to $G_{\ell-1}$ and then for $k=\ell-1,\hdots, 1$ to perform two consecutive coarse-grid corrections, instead of one as in the $V$-cycle. The resulting procedure is known as multigrid with a {\it $W$-cycle}.  Finally, in between the $V$-cycle and the $W$-cycle, we have the {\it F-cycle}, where in the process of going from the coarsest grid to the finest one, if a grid has been reached for the first time, another correction with the coarser grids using a $V$-cycle is performed. 

In our context, we use one cycle of the multigrid algorithm, which is a linear operator as a function of the residual on the finest grid, as a preconditioner for solving \eqref{matrix_Q_inversion} with the \texttt{BiCGStab} method. Since $Q$ is related to the finite difference discretization of the operator $-\partial_{tt}^2 + \nu^2 \Delta^2- \Delta$ and $\nu$ is not necessarily small, as in \cite{AchdouPerez2012}, it is natural to consider the refinements of the grid only in the space variable (we refer the reader to \cite{Multigrid2000} for  semi-coarsening multigrid methods in the context of  anisotropic operators). We suppose that the spatial mesh is such that $N_h=H2^{\ell}$, with $H>1$ and $\ell$ is a positive integer (in the numerical example in the next section $H$ will be equal to $2$ or $3$, $H^2$ being the number of spatial points in the coarsest grid).

Let us specify the main steps of the multigrid method we use as a preconditioner.

\begin{itemize}
\item[$\diamond$] {\it Hierachy of Grids:} Semi-coarsened grids $G_k$ with size $(N_T+1)H^22^{2k}$ for all $k=0\dots \ell$. \vspace{0.2cm}
\item[$\diamond$]{\it Cycle:} We use the F-cycle.\vspace{0.2cm}
\item[$\diamond$] {\it Restriction operator:} As in \cite{AchdouPerez2012}, in order to restrict the residual on the grid $G_k$ to the grid $G_{k-1}$, we use the second-order operator $R_k:\RR^{(2^kH)^2(N_T+1)}\rightarrow \RR^{(2^{k-1}H)^2(N_T+1)}$ defined by
$$
(R_kX)^n_{i,j} := \frac1{16}
\left(
\begin{aligned}
4X^n_{2i,2j} + 2(X^n_{2i+1,2j}+X^n_{2i-1,2j}+X^n_{2i,2j+1}+X^n_{2i,2j-1}) \\
X^n_{2i-1,2j-1}+X^n_{2i-1,2j+1}+X^n_{2i+1,2j-1}+X^n_{2i+1,2j+1}
\end{aligned}
\right),
$$
for $n=0,\hdots, N_{T}$, $i$, $j=1, \hdots, 2^{k-1}H$.
\vspace{0.1cm}
\item[$\diamond$] {\it Interpolation operator:} We denote by $I_k:\RR^{(2^{k-1}H)^2(N_T+1)}\rightarrow \RR^{(2^kH)^2(N_T+1)}$ the interpolation operator from the grid $G_{k-1}$ to the grid $G_k$. We have chosen a standard bilinear interpolation operator in the space variable, which is also a second-order operator  and dual to the restriction operator ($I_k = 4R_k^*$).  According to  \cite{Brandt_1994}, the sum of the orders of $R_k$ and $I_k$ has to be at least equal to the degree of the differential operator. In our context, both are equal to $4$.
%We also implemented the algorithm with third-order operators: we took the bicubic interpolation operator and then its dual operator as restriction operator. Then we need less iterations of BiCGStab to converge but each one takes longer. It seems to be a little bit faster but we thought that this small increase in speed was not worth the fact to use higher-order operators.
 \vspace{0.2cm}
\item[$\diamond$] {\it Linear systems on the different grids:} The linear systems are defined by the matrices $$Q_k:=A_kA_k^*+B_kB_k^*, \hspace{0.5cm} k=0, \hdots, \ell,$$
where we recall that $A_k$ and $B_k$ are the finite difference discretizations of $\partial_{t} - \nu \Delta$ and $\mbox{div}(\cdot)$, respectively, on the grid $G_k$ (see \eqref{definition_of_A_B}). \vspace{0.2cm}
%
% for the coarser grids we use the same discrete approximation $A_k, B_k$ on the coarser grids as defined in to form $Q_k = A_kA_k^*+B_kB_k^*$.\vspace{0.2cm}
\item[$\diamond$] {\it Smoother:} Here we have used Gauss-Seidel iterations in the lexicographic order. There is no reason for choosing the lexicographic order, other than its simplicity.
% just the simplest manner to implement the smoothing step and we have noticed after some tests with different orders that this choice did not impact neither on the velocity nor on the effectiveness of our algorithm. 
% We will apply $\eta_1$ smoothing steps before going to a coarser grid and $\eta_2$ steps after coming back from a coarser grid.
%
%We also implemented a symmetric Gauss-Seidel algorithm so that if we take $\eta_1=\eta_2$ then the preconditioner becomes a symmetric operator and thus we do not need to use BiCGStab algorithm but just a simple PCG.
\vspace{0.2cm}

\item[$\diamond$] {\it Solving the system on the coarsest grid $G_0$:} We can use an exact solver such as \textsl{backlash} in MATLAB. Indeed, in $G_0$ the size of the system is really small with respect to the size of the system on the grid $G_\ell$ (in $G_0$, we can even store the inverse of  $Q_0$  and inversion at this level just becomes a matrix multiplication).
\end{itemize}\vspace{0.2cm}

The multigrid preconditoning procedure is summarized in Algorithm \ref{mgmfg}.

\begin{algorithm} 
\caption{Multigrid Preconditioner for $Q_{\ell}x_{\ell} = b_{\ell}$}\label{mgmfg}
\begin{algorithmic}
\State $P_L: y \mapsto$ MultigridSolver($\ell, 0, y,$cycle)
\State $x_l \leftarrow $BiCGStab($Q_{\ell}, b_{\ell},P_L, I_d, x_0,$ \emph{tol}) 
\Procedure {MultigridSolver}{$k,x_k,b_k,$cycle}
\If{$k = 0$}
\State $x_k \leftarrow Q_0^{-1}b_k$
\Else
\State $x_k \leftarrow $Perform $\eta_1$ steps of Gauss-Seidel from $x_k$ with $b_k$ as second member.
\State $x_{k-1} \leftarrow 0$
\State $x_{k-1} \leftarrow $MultigridSolver($k-1, x_{k-1}, R_k(b_k-Q_kx_k),$cycle)
\If{cycle is W}
\State $x_{k-1} \leftarrow $MultigridSolver($k-1, x_{k-1}, R_k(b_k-Q_kx_k),$cycle)
\EndIf
\If{cycle is F}
\State $x_{k-1} \leftarrow $MultigridSolver($k-1, x_{k-1}, R_k(b_k-Q_kx_k),$V)
\EndIf
\State $x_k \leftarrow x_k + I_kx_{k-1}$
\State $x_k \leftarrow $Perform $\eta_2$ steps of Gauss-Seidel from $x_k$ with $b_k$ as second member.
\EndIf
\Return $x_k$
\EndProcedure
\end{algorithmic}
\end{algorithm}

\subsection{Numerical Tests}
In this section we present a test case considered in~\cite{MR2679575}, for which the stationary solution has been computed numerically in~\cite{BAKS} using the primal-dual algorithm presented above.

The setting is as follows: we consider system \eqref{MFGcont} with  $g \equiv 0$ and 
$$
	f(x,y,m) := m^2 - \overline H(x,y), \qquad \overline H(x,y) = \sin(2 \pi y) + \sin(2 \pi x) + \cos(2 \pi x),
$$
for all  $(x,y) \in \RR^2$ and $m \in \RR_+$. This means that in the underlying differential game modelled by \eqref{MFGcont}, a typical agent aims to get closer to the maxima of $\bar{H}$ and, at the same time, he/she is adverse to crowded regions (because of the presence of the $m^2$ term in $f$).
%
%in the definition of the discrete problem~\eqref{VARdis}, the functions $\B,\F,\G$  are defined by~\eqref{definitions_functions_optimization_problem}. We take $\hat b$ given by~\eqref{definition_b_hat}, $A,B$ given by~\eqref{e:defmA}--\eqref{e:defmB} and we choose a vanishing final cost, that is $G \equiv 0$. Last, $F$ is defined by~\eqref{functions_definitions}, and we use, for $(x,y) \in \RR^2$ and $m \in \RR_+$,
%$$
%	f(x,y,m) = m^2 - \overline H(x,y), \qquad \overline H(x,y) = \sin(2 \pi y) + \sin(2 \pi x) + \cos(2 \pi x).
%$$
% 

We first validate the dynamic behavior of our solution. Figure~\ref{fig:evolm} shows the evolution of the mass at four different time steps. Starting from a constant initial density, the mass converges to a steady state, and then, when $t$ gets close to the final time $T$, the mass is influenced by the final cost and converges to a final state. This behavior is referred to as \emph{turnpike phenomenon} in the literature \cite{Porretta2016}. It is illustrated by Figure~\ref{fig:dist-statio}, which displays as a function of time $t$ the distance of the mass at time $t$ to the stationary state computed as in~\cite{BAKS}. 
In other words, denoting by $m^\infty \in \mathbb{R}^{N_h\times N_h}$ the solution to the discrete stationary problem and by $m \in \M$ the solution to the discrete evolutive problem, Figure~\ref{fig:dist-statio} displays the graph of 
$k \mapsto \|m_\infty - m^k\|_{\ell_2} = \left(h^2 \sum_{i,j} (m^\infty_{i,j} - m^k_{i,j})^2 \right)^{1/2}$, $k \in \{0,\dots,N_T\}$.

\begin{figure}	
	\centering
	\begin{subfigure}[t]{0.45\linewidth}%{1in}
		\centering
		\includegraphics[width=\linewidth]{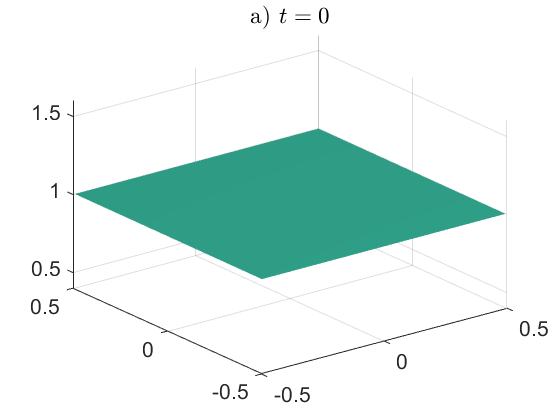}
	\end{subfigure}
	\quad
	\begin{subfigure}[t]{0.45\linewidth}
		\centering
	  	\includegraphics[width=\linewidth]{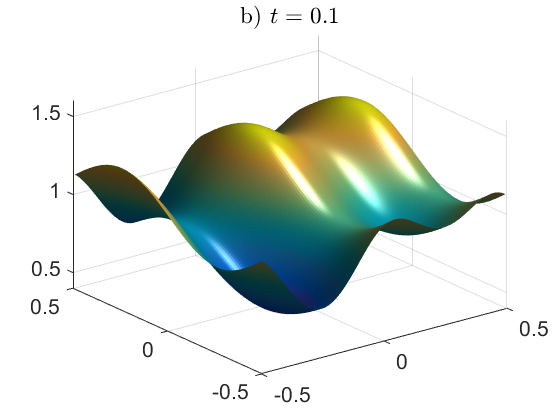}
	\end{subfigure}
	
	\begin{subfigure}[t]{0.45\linewidth}
		\centering
	  	\includegraphics[width=\linewidth]{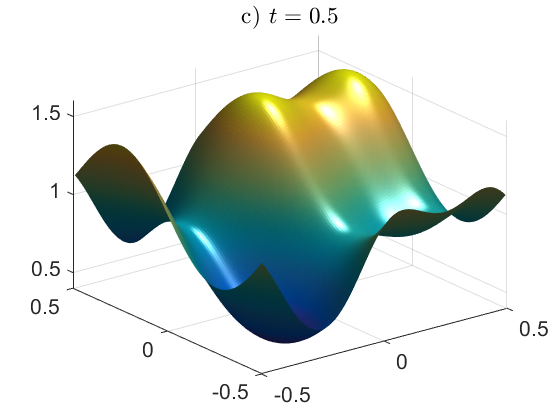}	
	\end{subfigure}
	\quad
	\begin{subfigure}[t]{0.45\linewidth}
		\centering
	  	\includegraphics[width=\linewidth]{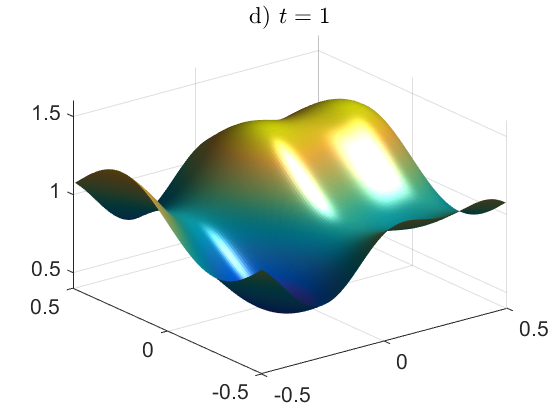}
	\end{subfigure}
	\caption{\label{fig:evolm} Evolution of the density $m$ obtained with the multi-grid preconditioner for $\nu = 0.5, T=1, N_T = 200$ and $N_h = 128$. At $t=0.12$ the solution is close to the solution of the associated  stationary MFG.}
\end{figure}

\begin{figure}	
		\centering
		\includegraphics[width=0.6\linewidth]{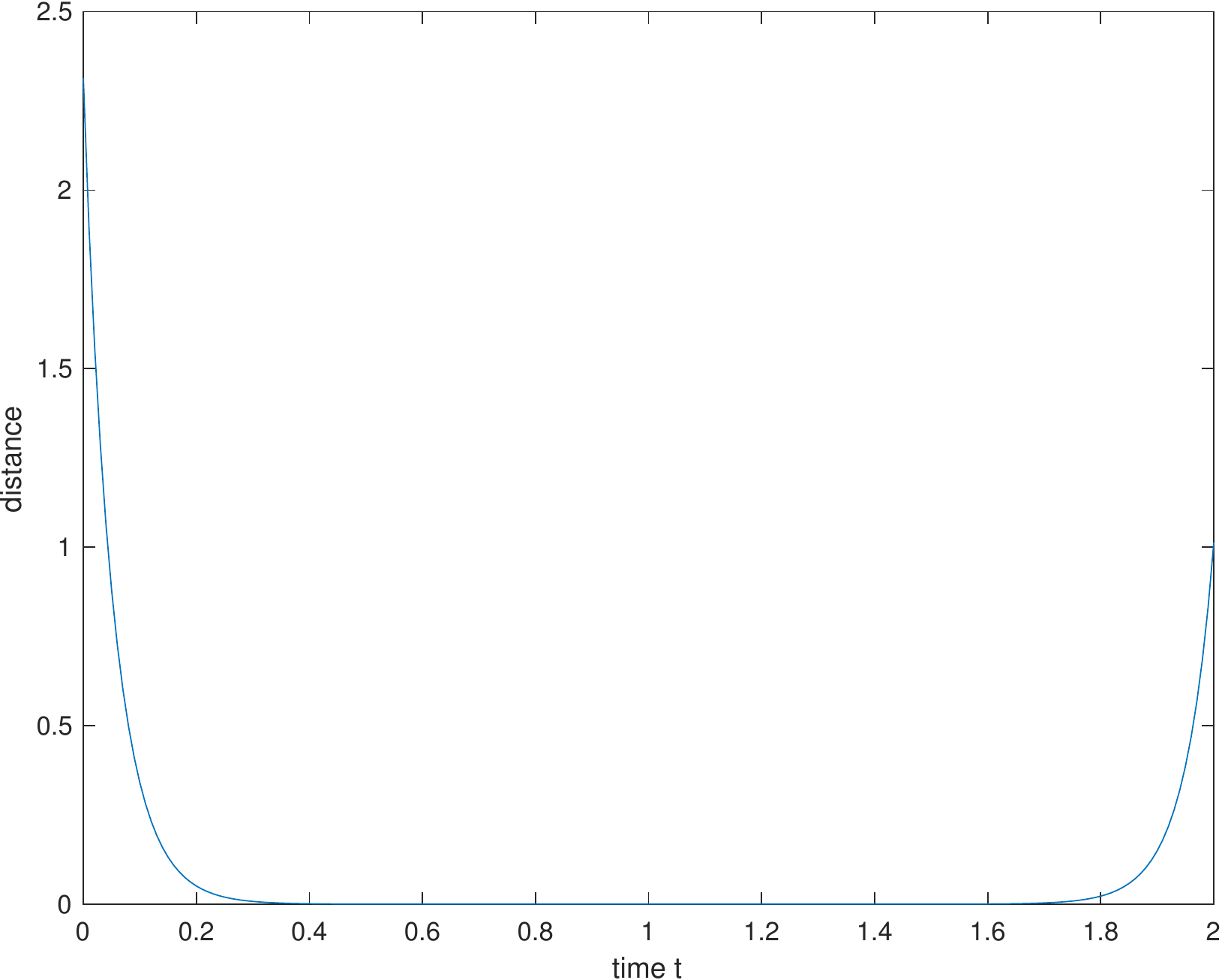}	
	\caption{\label{fig:dist-statio} Distance to the stationary solution at each time $t \in [0,T]$, for $\nu = 0.5, T=2, N_T = 200$ and $N_h =128$. The distance is computed using the $\ell^2$ norm  as explained in the text. The turnpike phenomenon is observed as for a long time frame the time-dependent mass approaches the solution of the stationary MFG.}
\end{figure}

For the multi-grid preconditioner, Table \ref{tab:MG_cvg_time} shows the computation times for different discretizations. It can be observed that finer meshes with $128^3$ degrees of freedom are solvable within CPU times which outperfom others methods shown in the Appendix and in \cite{BAKS}. Furthermore, the method is robust with respect to different viscosities. 

From Table \ref{tab:MG_cvg_time} we observe that most of the computational time is used for solving the second proximal operator (the third equality of \eqref{eq_CP}), which does not use a multigrid strategy but which is a pointwise operator (see Proposition $3.1$ of \cite{BAKS}) and thus could be fully paralellizable.

\begin{table}[!htb]
    \begin{minipage}{.5\linewidth}
      \caption*{(a) Grid with $64 \times 64 \times 64$ points.}
      \centering
        \begin{tabular}{| c | c | c | c |}
            \hline
            $\nu$ & Total time & Time first prox & Iterations \\
            \hline
            $0.6$ & $116.3$ [s] & $11.50$ [s] & $20$ \\
            \hline
            $0.36$ & $120.4$ [s] & $11.40$ [s] & $21$  \\
            \hline
            $0.2$  & $119.0$ [s]  & $11.26$ [s] & $22$ \\
            \hline
            $0.12$  &  $129.1$ [s] & $14.11$ [s] & $22$  \\
            \hline
            $0.046$ &  $225.0$ [s] & $23.28$ [s] & $39$  \\
            \hline
        \end{tabular}
    \end{minipage}%
    \begin{minipage}{.5\linewidth}
      \centering
        \caption*{(b) Grid with $128 \times 128 \times 128$ points.}
        \begin{tabular}{| c | c | c | c |}
            \hline
            $\nu$ & Total time & Time first prox & Iterations \\
            \hline
            $0.6$ &  $921.1$ [s] & $107.2$ [s] &  $20$ \\
            \hline
            $0.36$ &  $952.3$ [s] &  $118.0$ [s] & $21$ \\
            \hline
            $0.2$  &  $1028.8$ [s] &  $127.6$ [s] & $22$ \\
            \hline
            $0.12$  & $1036.4$ [s]  &  $135.5$ [s] & $23$ \\
            \hline
            $0.046$ &  $1982.2$ [s] & $260.0$ [s] & $42$ \\
            \hline
        \end{tabular}
    \end{minipage} 
  \vskip 2mm  	
    \caption{Time (in seconds) for the convergence of the Chambolle-Pock algorithm, cumulative time of the first proximal operator with the multigrid preconditioner, and number of iterations, for different viscoty values $\nu$ and two types of grids. Here we used $\eta_1 = \eta_2 = 2, T=1$ and a tolerance between two iterations of the Chambolle-Pock algorithm equal to  $10^{-6}$ in normalized $\ell^2$-norm.}\label{tab:MG_cvg_time}
\end{table}

Unlike the stationary case, low viscosities seem to make the algorithm be slightly slower. However, Table \ref{tab:MG_bicg_ite} shows that the average number of iterations of \texttt{BiCGStab} stays low regardless of the viscosity. Indeed Table \ref{tab:MG_cvg_time} shows that more Chambolle-Pock iterations are needed to converge. The same behavior happens when we use a direct exact solver instead of the multi-grid preconditioned \texttt{BiCGStab} algorithm.

\begin{table}[!htb]

    \begin{minipage}{.5\linewidth}
      \caption*{(a) iterations to decrease the residual by a factor $10^{-3}$.}
      \centering
        \begin{tabular}{| c | c | c | c |}
            \hline
            $\nu$ & {\tiny $32\times32\times32$} & {\tiny $64\times64\times64$} & {\tiny $128\times128\times128$}\\
            \hline
            $0.6$ & $1.65$ & $1.86$ &  $2.33$\\
            \hline
            $0.36$ & $1.62$ & $1.90$ &  $2.43$ \\
            \hline
            $0.2$  & $1.68$ & $1.93$ & $2.59$ \\
            \hline
            $0.12$ & $1.84$ & $2.25$ &  $2.65$ \\
            \hline
            $0.046$ & $1.68$ & $2.05$ &  $2.63$ \\
            \hline
        \end{tabular}
    \end{minipage}%
    \begin{minipage}{.5\linewidth}
      \centering
        \caption*{(b) iterations to solve the system with an error of $10^{-8}$.}
        \begin{tabular}{| c | c | c | c |}
            \hline
            $\nu$ & {\tiny $32\times32\times32$} & {\tiny $64\times64\times64$} & {\tiny $128\times128\times128$}\\
            \hline
            $0.6$ & $3.33$ & $3.40$ &  $3.38$\\
            \hline
            $0.36$ & $3.10$ & $3.21$ &  $3.83$ \\
            \hline
            $0.2$  & $3.07$ & $3.31$ & $4.20$ \\
            \hline
            $0.12$ & $3.25$ & $3.73$ &  $4.64$ \\
            \hline
            $0.046$ & $2.88$ & $3.59$ &  $4.67$ \\
            \hline
        \end{tabular}
    \end{minipage} 
    	
    \caption{Average number of iterations of the preconditioned \texttt{BiCGStab} with $\eta_1 = \eta_2 = 2, T=1$ and a tolerance between two iterations of the Chambolle-Pock algorithm equal to $10^{-6}$ in normalized $\ell^2$-norm.}   \label{tab:MG_bicg_ite}
\end{table}

{\bf Concluding Remarks.} In this work we have developed a first-order primal-dual algorithm for the solution of second-order, time-dependent mean field games. The procedure consists of: a variational formulation for the MFG, its discretization via finite differences, the application Chambolle-Pock algorithm to the resulting minimization. While this method has been studied  for stationary MFG in \cite{BAKS}, its numerical realization for time-dependent MFGs was prohibitive in terms of computing time, as the Chambolle-Pock iteration requires the solution of a large-scale linear system at each iteration. We have overcome this difficulty by studying different preconditioning strategies for the associated linear system. Overall, the multigrid preconditioner with a \texttt{BiCGStab} iteration performs satisfactorily for different discretizations and viscosity values. \medskip\\

{\bf Acknowledgments.}  The third author wants to acknowledge funding within the ANR project MFG ANR-16-CE40-0015-01 operated by the French National Research Agency (ANR).

Most of this work was realized while the fourth author was a postdoctoral fellow at NYU Shanghai and supported by a discretionary research fund.

During the first phase of the project, the fifth author was affiliated to the Unit\'e de Math\'ematiques Pures et Appliqu\'ees (UMPA) UMR CNRS $5669$, of the \'Ecole Normale Sup\'erieure de Lyon and to the Project-Team Beagle of the Inria Rh\^one-Alpes. He wishes to acknowledge funding within the framework of the LABEX MILYON (ANR-10-LABX-0070) of the Universit\'e de Lyon, within the program "Investissements d'Avenir" (ANR-11-IDEX-0007). In addition, his participation to this project has been partially supported by the European Research Council (ERC) under the European Union's Horizon 2020 research and innovation program (grant agreement No. 639638). His participation  to this project has also been partially supported by a CEMRACS 2017 scholarship.

The sixth author thanks the  support from the PGMO project VarPDEMFG and from the ANR project MFG ANR-16-CE40-0015-01. 

\bibliographystyle{plain}
\bibliography{biblio_MFG_CEMRACS}
\section*{Appendix}

\begin{table}[!htb]
\begin{minipage}{.48\linewidth}
	\caption*{(a) Unpreconditioned}
	\centering
	\begin{tabular}{|c||*{4}{c|}}\hline
		\backslashbox{$\nu$}{$N_h$}
		&\makebox[2.5em]{16}&\makebox[2.5em]{32}&\makebox[2.5em]{64}&\makebox[2.5em]{128}\\\hline\hline
		$5 \times 10^{-4}$ 	& 	18,5	&	87,6		&	448		&	1720	 \\\hline
		$5 \times 10^{-3}$ 	&	22,1	&	98,6	&	392		&	1750	 \\\hline
		$5 \times 10^{-2}$ 	&	20,7	&	93,4	&	607	&	8240		 \\\hline
		$0.5$					&	24,9	&	113	&	%1290	
			 {{[X]}	}&	%3730
			 	 {{[X]}}	 \\\hline
	\end{tabular}
\end{minipage}
\hfill
\begin{minipage}{.48\linewidth}
	\caption*{(b) \texttt{michol}}
	\centering
	\begin{tabular}{|c||*{4}{c|}}\hline
		\backslashbox{$\nu$}{$N_h$}
		&\makebox[2.5em]{16}&\makebox[2.5em]{32}&\makebox[2.5em]{64}&\makebox[2.5em]{128}\\\hline\hline
		$5 \times 10^{-4}$ 		&	15,8	&	77,8	&	346	&	1390		 \\\hline
		$5 \times 10^{-3}$ 		&	12,4	&	80,9	&	325	&	1244		 \\\hline
		$5 \times 10^{-2}$ 		&	5,47	&	26,3	&	138	&	636		 \\\hline
		$0.5$					&	3,69	&	16,5	&	%226	&	%897	
			 {{[X]}}	& {{[X]}}	 \\\hline
	\end{tabular}
\end{minipage}%
\vskip 2mm
	\caption{Conjugate Gradient computation times (s). (a) Unpreconditioned. (b) Preconditioned with modified incomplete Cholesky factorization. Time discretization: $N_T=40$. {{[X]}} indicates no convergence.}
\label{tab_PCG}
\end{table}

\begin{table}[!htb]

\begin{minipage}{.48\linewidth}
	\caption*{(a) Unpreconditioned}
	\centering
	\begin{tabular}{|c||*{5}{c|}}\hline
		\backslashbox{$\nu$}{$N_h$}
		&\makebox[2.5em]{8}&\makebox[2.5em]{16}&\makebox[2.5em]{32}&\makebox[2.5em]{64}&\makebox[2.5em]{128}\\\hline\hline
		$5 \times 10^{-4}$ 	&	2.42	&	14.2 &	69.0	&	294 &	1210 \\\hline
		$5 \times 10^{-3}$ 	&	3.09	&	16.3	&	63.9	&	270		&	1210	\\\hline
		$5 \times 10^{-2}$ 	&	1.41	&	12.9	&	61.3	&	389		&	5470	 \\\hline
		$0.5$					&	3.41	&	16.5	&	98.9	&	{{[X]}}	&	{{[X]}}	 \\\hline
	\end{tabular}
\end{minipage}%
\hfill
\begin{minipage}{.48\linewidth}
	\caption*{(b) \texttt{michol}}
	\centering
	\begin{tabular}{|c||*{4}{c|}}\hline
		\backslashbox{$\nu$}{$N_h$}
		&\makebox[2.5em]{16}&\makebox[2.5em]{32}&\makebox[2.5em]{64}&\makebox[2.5em]{128}\\\hline\hline
		$5 \times 10^{-4}$ 	&	15.7	&	80.4	&	412	&	1890	 \\\hline
		$5 \times 10^{-3}$ 	&	12.2	&	82.2	&	369	&	1650	 \\\hline
		$5 \times 10^{-2}$ 	&	5.25	&	27.2	&	174	&	894	 \\\hline
		$0.5$					&	3.53	&	18.8	&	122	&	2120	 \\\hline
	\end{tabular}
\end{minipage}
\vskip 2mm
	\caption{\texttt{BiCGStab} computation times (s). (a) Unpreconditioned. (b) Preconditioned with modified incomplete Cholesky factorization. Time discretization: $N_T=40$. {{[X]}} indicates no convergence.}
\label{tab_BiCGStab}
\end{table}

\end{document}